\documentclass[10pt]{article}

\usepackage[margin=1.12in]{geometry}
\usepackage[T1]{fontenc}
\usepackage[utf8]{inputenc}
\usepackage{amsmath,amssymb,amsthm,mathtools}
\usepackage{authblk}
\usepackage{enumitem}
\usepackage{aliascnt}
\usepackage[hyperfootnotes=false]{hyperref}
\usepackage[nameinlink,noabbrev]{cleveref}
\usepackage{microtype}

\hypersetup{colorlinks=true,linkcolor=blue,citecolor=blue,urlcolor=blue}

\setlist[enumerate]{leftmargin=2.1em,itemsep=0.15em,topsep=0.25em}
\allowdisplaybreaks[2]

\theoremstyle{plain}
\newtheorem{theorem}{Theorem}[section]
\newaliascnt{lemma}{theorem}
\newtheorem{lemma}[lemma]{Lemma}
\aliascntresetthe{lemma}
\newaliascnt{proposition}{theorem}
\newtheorem{proposition}[proposition]{Proposition}
\aliascntresetthe{proposition}
\newaliascnt{corollary}{theorem}

\aliascntresetthe{corollary}
\newaliascnt{conjecture}{theorem}

\aliascntresetthe{conjecture}
\newaliascnt{question}{theorem}
\newtheorem{question}[question]{Question}
\aliascntresetthe{question}
\newaliascnt{claim}{theorem}
\newtheorem{claim}[claim]{Claim}
\aliascntresetthe{claim}
\theoremstyle{remark}
\newaliascnt{remark}{theorem}

\aliascntresetthe{remark}
\newtheorem*{remark*}{Remark}

\crefname{theorem}{Theorem}{Theorems}
\crefname{lemma}{Lemma}{Lemmas}
\crefname{proposition}{Proposition}{Propositions}
\crefname{corollary}{Corollary}{Corollaries}
\crefname{conjecture}{Conjecture}{Conjectures}
\crefname{question}{Question}{Questions}
\crefname{claim}{Claim}{Claims}
\crefname{section}{Section}{Sections}
\crefname{subsection}{Subsection}{Subsections}
\crefname{equation}{Equation}{Equations}
\crefalias{enumi}{proposition}

\DeclareMathOperator{\Aut}{Aut}
\DeclareMathOperator{\Emb}{Emb}
\newcommand{\symdiff}{\mathbin{\triangle}}
\newcommand{\eps}{\varepsilon}
\newcommand{\bmu}{{\boldsymbol{\mu}}}
\newcommand{\bnu}{{\boldsymbol{\nu}}}
\newcommand{\bomega}{{\boldsymbol{\omega}}}

\title{Exact extremal constructions for the inducibility of blowup graphs}
\author{Wanfang~Chen\thanks{Email: \texttt{wanfangchen.math@gmail.com}}
  \quad and\quad Xizhi~Liu\thanks{Email: \texttt{liuxizhi@ustc.edu.cn}}}
\affil{\small School of Mathematical Sciences, University of Science and Technology of China, Hefei, China}
\date{\today}

\makeatletter
\renewcommand{\maketitle}{%
  \par
  \begingroup
    \renewcommand\thefootnote{\@fnsymbol\c@footnote}%
    \def\@makefnmark{\rlap{\@textsuperscript{\normalfont\@thefnmark}}}%
    \long\def\@makefntext##1{\parindent 1em\noindent
      \hb@xt@1.8em{\hss\@textsuperscript{\normalfont\@thefnmark}}##1}%
    \begin{center}
      \let\footnote\thanks
      {\Large\bfseries \@title\par}
      \vspace{0.6em}
      {\normalsize \@author\par}
      \vspace{0.4em}
      {\normalsize \@date\par}
    \end{center}
    \@thanks
  \endgroup
  \setcounter{footnote}{0}%
  \vspace{0.8em}
}
\renewenvironment{abstract}{%
  \begin{center}\bfseries Abstract\end{center}
  \begin{quote}\small\noindent\ignorespaces
}{%
  \end{quote}\vspace{0.6em}
}
\makeatother

\begin{document}

\maketitle

\begin{abstract}
For a finite graph $H$ and a positive integer $h$, the $h$-blowup $H^{(h)}$ of $H$ is the graph obtained by replacing each vertex of $H$ by a set of size $h$ and each edge by a complete bipartite graph between the corresponding sets. We prove that, for every $H$, there exists a constant $h_*(H)$ such that whenever $h\ge h_*(H)$ and $n$ is sufficiently large, every $n$-vertex graph maximizing the number of induced copies of $H^{(h)}$ is a blowup of $H$. This refines the asymptotic result of Hatami, Hirst and Norine and settles the question posed by Bollob\'as, Egawa, Harris and Jin in 1995.
\end{abstract}

\section{Introduction}
\label{sec:introduction}

A fundamental problem in extremal graph theory is to determine the maximum number of induced copies of a given graph $F$ among all $n$-vertex graphs. Formally, given two graphs $F$ and $G$, let $I(F,G)$ denote the number of induced copies of $F$ in $G$, that is, the number of subsets $S\subseteq V(G)$ of size $v(F)$ such that the induced subgraph $G[S]$ is isomorphic to $F$. Here $v(F)$ denotes the number of vertices of $F$. For every positive integer $n$, put
\[
        I(F,n)\coloneqq \max\{I(F,G):v(G)=n\}.
\]
The \emph{inducibility} of $F$ is then defined as
\[
        i(F)\coloneqq\lim_{n\to\infty}{I(F,n)}/{\tbinom{n}{v(F)}}.
\]
The limit exists by the standard averaging argument.

A systematic study of the inducibility problem was initiated by Pippenger--Golumbic~\cite{PippengerGolumbic}, who established several general properties of inducibility and determined the inducibility of complete bipartite graphs with part sizes differing by at most one. Determining $I(F,n)$, or even $i(F)$, is difficult in general. For example, the inducibility of the path on four vertices remains open; see, for instance, Exoo~\cite{Exoo} and Even-Zohar--Linial~\cite{EvenZoharLinial}. For small graphs, bounds on the inducibility of graphs on four, five, and six vertices have been obtained in several works, including Exoo~\cite{Exoo}, Hirst~\cite{HirstFourVertices}, Even-Zohar--Linial~\cite{EvenZoharLinial}, Pikhurko--Slia{\v c}an--Tyros~\cite{Sliacan19}, and Bodn\'ar et al.~\cite{BGLLPS26}, some of which employ the computer-assisted flag algebra machinery of Razborov~\cite{Raz07}.

A particularly interesting result of Balogh--Hu--Lidick\'y--Pfender~\cite{BaloghHuLidickyPfender} is the determination of the inducibility of the $5$-cycle $C_5$, confirming a special case of an old conjecture of Pippenger--Golumbic~\cite{PippengerGolumbic} on the inducibility of cycles. This conjecture remains open for longer cycles, and improved upper bounds were obtained by Hefetz--Tyomkyn~\cite{HefetzTyomkyn} and by Kr\'al'--Norin--Volec~\cite{KralNorinVolec}. For general graphs, results of Yuster~\cite{YusterAlmostAll} and Fox--Huang--Lee~\cite{FoxHuangLee} imply that for almost all graphs $F$, it holds that $i(F)=\frac{v(F)!}{v(F)^{v(F)}-v(F)}$, where the lower-bound construction arises from nested blowups of $F$ itself. Extending the results of~\cite{MMNT19,KST19,FS20} on the edge-statistic conjecture of Alon--Hefetz--Krivelevich--Tyomkyn~\cite{AHKT20}, Ueltzen~\cite{Uel24} recently characterized graphs whose inducibility is bounded away from zero in the large-order regime.

In this work, we focus on the blowup of graphs. If $H$ is a graph and $\mathbf k\in\mathbb N^{V(H)}$, the blowup $H(\mathbf k)$ is obtained by replacing each vertex $x\in V(H)$ by an independent set of size $\mathbf k(x)$, and by joining two replacement classes completely exactly when the corresponding vertices of $H$ are adjacent. If all coordinates of $\mathbf k$ are equal to $h$, we write $H^{(h)}$ and call it the \emph{balanced $h$-blowup} of $H$.

Blowups are natural candidates for extremal constructions in many inducibility problems.
Bollob\'as, Egawa, Harris and Jin~\cite{BollobasEgawaHarrisJin} proved the complete-graph case: if $t>1+\log r$, then, for all sufficiently large $n$, the maximum number of induced copies of $K_r^{(t)}$ in an $n$-vertex graph is attained by the Tur\'an graph $T_r(n)$, and this lower bound on $t$ is necessary, since for smaller $t$ the extremal construction is no longer a blowup of $K_r$. They then asked which base graphs admit this exact blowup conclusion:

\begin{question}[\cite{BollobasEgawaHarrisJin}]
For which graphs $H$ does there exist a constant $h$ such that, for all sufficiently large $n$, every $n$-vertex graph with the maximum number of induced copies of $H^{(h)}$ is a blowup of $H$?
\end{question}

Hatami, Hirst and Norine~\cite{HatamiHirstNorine} proved the corresponding asymptotic theorem for every graph $H$. We briefly recall the language in which their result is stated. A weighted graph $X^\bmu$ is a finite graph $X$ together with a probability measure $\bmu$ on $V(X)$; ordinary finite graphs are viewed with the uniform measure. A map $\phi:V(F)\to V(X)$ is a strong homomorphism if it preserves both edges and non-edges. Thus $s(F;X^\bmu)$ is the probability that a random map from $V(F)$ to $V(X)$, chosen independently according to $\bmu$, realizes the same adjacency pattern as $F$. The use of arbitrary maps rather than injective maps is natural in the asymptotic setting, since collisions have negligible probability in large finite graphs. Finally, $H^\bnu$ denotes the graph $H$ with vertex weights $\bnu$, and $M(V(H))$ is the set of full-support probability measures on $V(H)$; these weights encode the limiting part sizes of blowups of $H$.
\begin{theorem}[\cite{HatamiHirstNorine}]
Let $H$ be a graph. There exists a positive integer $h_0$ such that for every $h\ge h_0$, there exists a distribution $\bnu\in M(V(H))$ with
\[
        \sup_{X^\bmu} s\bigl(H^{(h)};X^\bmu\bigr)
        = s\bigl(H^{(h)};H^\bnu\bigr),
\]
where the supremum is over all finite weighted graphs $X^\bmu$.
\end{theorem}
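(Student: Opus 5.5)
We have to prove the Hatami--Hirst--Norine theorem: for $h$ large the supremum of $s(H^{(h)};X^\bmu)$ is attained by a weighted copy of $H$. The plan has four steps.

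\textbf{Step 1: reduce to twin-free $H$ and reformulate.} We may assume $H$ has no two vertices with identical neighbourhoods (twins). Merging twins changes $H^{(h)}$ only into an unbalanced blowup of the reduced graph, and the argument below works verbatim for blowups whose part sizes are all large and within a bounded ratio. Then write
\[
  s\bigl(H^{(h)};X^\bmu\bigr)=\sum_{\psi}\prod_{x\in V(H)}\bmu\bigl(A_{\psi(x)}\bigr)^{h}\cdot c_H,
\]
where $c_H$ is a combinatorial constant. The point is that a strong homomorphism $H^{(h)}\to X$ must send each class $x$ to a set $S_x$ of vertices with pairwise identical adjacency to the other classes' images. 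Grouping the vertices of $X$ into twin classes, the image is governed by a map $\psi$ from $V(H)$ to twin classes $A$ of $X$ (allowing images inside independent sets) that strongly preserves $H$.

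\textbf{Step 2: an upper bound that concentrates on one copy of $H$.} Heuristically, since the exponent $h$ is huge, the sum is dominated by its largest term. We compare it with the optimum over $H^\bnu$, which is at least $c_H\,(1/v(H))^{h v(H)}$ times a fixed positive constant. Given an optimal $X^\bmu$ (by compactness and removal of useless vertices we may take a near-optimal one), consider the family of "copies" $\psi$ of $H$ in $X$ contributing non-negligibly. Each contributes at most $\prod_x m_x^{h}$, where $m_x$ is the mass of the independent set hit by class $x$.

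We then show that for large $h$ the number of essentially different copies with large product is bounded. The key estimate is that two distinct copies must differ on some vertex where the masses are disjoint, which forces a multiplicative loss $(1-\delta)^h$ relative to a single copy. Twin-freeness of $H$ is what prevents two copies from overlapping cheaply: overlapping copies would force twins.

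\textbf{Step 3: a local improvement (symmetrization) argument.} Take $X^\bmu$ achieving the supremum up to $\eps$ and pick the dominant copy $\psi_0$. Move all the mass of $X$ onto the independent sets $\psi_0(x)$, merging them into single weighted vertices. This produces a weighted $H^\bnu$. The change in $s$ is the gain from concentrating mass on $\psi_0$ (a factor like $(1+\eta)^{h}$ on the main term) minus the loss of the contributions of other copies, which by Step 2 total at most $(1-\delta)^{h}$ times the main term. For $h\ge h_0$ the gain wins, so $s(H^{(h)};H^\bnu)$ is at least the supremum. Compactness of $M(V(H))$ then gives an attained maximum; full support holds because a zero weight kills every term.

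\textbf{Main obstacle.} The hard part is Step 2, obtaining a loss factor $\delta$ uniform over all $X$ independent of $h$. I would try a Zykov-style symmetrization first: replace vertices of $X$ by clones of the best vertex in their twin-neighbourhood pattern. This lets one assume $X$ is itself twin-free and then bound $v(X)$ in terms of $H$, after which everything is a finite-dimensional optimization in which the exponential-in-$h$ comparison is straightforward.
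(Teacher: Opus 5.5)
There is a genuine gap. Steps 1--3 rest on an exponential-in-$h$ separation between one dominant copy of $H$ and all competitors, and no such separation exists near a blowup, which is exactly where the theorem is delicate.

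First, the expansion in Step 1 is false. A strong homomorphism only needs each class of $H^{(h)}$ to land in an independent set $S_x$ with the prescribed complete/empty pattern to the other images; $S_x$ need not lie in a twin class of $X$. For instance, replace one part of an optimal $H^{\bnu}$ by a perfect matching on $2k$ vertices of equal tiny mass. Every twin class there is a singleton, so your sum is negligible, yet by \cref{lem:weighted-continuity} the density is within $O(|V(F)|^2/k)$ of the optimum.

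The honest replacement is a sum over maximal configurations $(S_x)_x$. It is only an upper bound, and Step 2 fails for it. Take $H$ twin-free and $H^{\bnu}$ optimal. Mark vertices $a_1,\dots,a_k$ of mass $\eps$ in a part $P_x$, add vertices $w_1,\dots,w_k$ of mass $\eps$ carrying the row of $P_x$ except that $w_ia_i$ is an edge, and renormalise. Class $x$ may now use $(P_x\setminus\{a_1,\dots,a_k\})\cup\{c_1,\dots,c_k\}$ for any choice $c_i\in\{a_i,w_i\}$. These are $2^k$ maximal configurations of equal weight, differing only on mass $O(k\eps)$. Hence distinct copies need not lose any factor $(1-\delta)^h$ with $\delta$ uniform. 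Moreover, once $hk\eps$ is small the configuration sum exceeds the true density by a factor close to $2^k$, so it is not dominated by its largest term.

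The same example defeats Step 3. Moving the stray mass onto one copy gains only a factor $1+O(hk\eps)$, not $(1+\eta)^h$ with $\eta$ uniform, so whether the move helps is a first-order question that your comparison cannot settle.

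Zykov symmetrization does not rescue this either. If you shift weight between non-twin vertices $u,v$ with $\bmu(u)=t$ and $\bmu(v)=\beta-t$, maps using both contribute mixed terms $t^i(\beta-t)^j$. So the density need not be convex in $t$, and symmetrization need not be monotone. Also, the examples above are near-optimal yet have twin-free quotients of unbounded order. A bound on $v(X)$ can therefore only come from exact optimality, which is essentially the statement itself.

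The paper does not reprove this theorem; it imports it from Hatami--Hirst--Norine (\cref{prop:weighted-input}). The structure of their argument, which the paper mirrors in finite form, shows what is missing. Large $h$ is first used to obtain \emph{approximate} statements:
\begin{itemize}
\item global stability: a near-optimal $X^\bmu$ is close in edit distance to a weighted blowup of the twin-free quotient with all parts bounded below (\cref{prop:weighted-input:global} and \cref{lem:boundary}); this is where a dominant-copy heuristic like yours belongs;
\item rooted stability (\cref{lem:weighted-rooted}): a vertex whose row is $\rho$-far from every blowup row has rooted density at most $\rho$ times that in the blowup.
\end{itemize}
Exactness then comes from a variational step at a genuine optimizer. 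One can work, for instance, over weighted graphs on a fixed number of vertices, where an optimizer exists by compactness. First-order optimality gives every positive-weight vertex rooted density equal to the total density. Together with rooted stability, this puts every row close to a blowup row. A final comparison of lost and created copies near the blowup then removes the remaining errors. The finite versions are \cref{lem:finite-local-optimality}, \cref{prop:finite-stability} and \cref{lem:many-forced-errors,lem:cleaning}.

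Your proposal has no substitute for the first-order condition or for the rooted lemma, and without them Steps 2--3 cannot be repaired.
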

Equivalently, the inducibility of $H^{(h)}$ is asymptotically achieved by blowups of $H$. In~\cite[Question~7.1]{HatamiHirstNorine}, Hatami, Hirst and Norine returned to the exact finite problem above, asking for the corresponding exact blowup conclusion for induced copies, and indicated how their local lemmas and variational argument should be adapted in order to treat induced embeddings.

The purpose of this note is to supply these finite details. Our main result is the following.

\begin{theorem}
\label{thm:main}
Let $H$ be a non-empty finite graph. There exists an integer $h_*=h_*(H)$ such that for every $h\ge h_*$ there exists $n_0=n_0(H,h)$ with the following property. If $n\ge n_0$ and $G$ is an $n$-vertex graph maximizing the number of induced copies of $H^{(h)}$, then $G$ is a blowup of $H$.
\end{theorem}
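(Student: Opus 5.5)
We would follow the standard stability-then-exactness scheme, using the asymptotic theorem of Hatami, Hirst and Norine as the starting point. Fix $h\ge h_0$ large and let $G$ be an $n$-vertex extremal graph. Two preliminary reductions come first. Since replacing $H$ by its twin-free reduction (merging vertices with identical neighbourhoods) does not change the family of blowups, we may assume $H$ is twin-free. Next, a symmetrization step: if two vertices $u,v$ of $G$ are non-adjacent, cloning $u$ onto $v$ (or vice versa) cannot decrease $I(H^{(h)},G)$ by maximality. A Zykov-type argument then shows that some extremal graph is a blowup of \emph{some} graph $X$ with $v(X)$ bounded, since vertices become partitioned into twin classes. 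For the statement about \emph{every} maximizer we will instead work directly with degree-type conditions.

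\textbf{Step 1 (stability).} Using the theorem of Hatami, Hirst and Norine together with a removal/compactness argument (graph limits), we show that $G$ is $o(n^2)$-close in edit distance to a blowup $H(\mathbf k)$ with part sizes $\mathbf k\approx \bnu n$, where $\bnu$ is an optimal weighting. This requires that the optimum be attained \emph{only} by blowups of $H$, i.e.\ a uniqueness strengthening of their theorem. We would extract it from their local lemmas. For $h$ large, any weighted $X^\bmu$ close to optimal must contain an induced blowup structure covering almost all the mass, because $s(H^{(h)};X^\bmu)$ is dominated by maps which are constant on each blown-up class. A non-$H$ part of $X$ loses a factor roughly exponential in $h$.

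\textbf{Step 2 (cleaning).} Let $\mathcal L(v)$ denote the number of induced copies of $H^{(h)}$ containing $v$. By extremality, all $\mathcal L(v)$ are within $O(n^{v(H)h-2})$ of one another, since otherwise deleting a vertex of low $\mathcal L$ and cloning one of high $\mathcal L$ improves $G$. Using this minimum-degree condition, every vertex $v$ has a neighbourhood pattern on the parts $V_1,\dots,V_{v(H)}$ close to that of some vertex $x\in V(H)$. Otherwise $\mathcal L(v)$ is smaller by a constant factor, because $v$ can only play roles compatible with its pattern. The twin-freeness of $H$ and $h$ large make this gap quantitative. We then reassign $v$ to the part of $x$, obtaining a partition in which every vertex has only $o(n)$ ``wrong'' adjacencies.

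\textbf{Step 3 (exactness).} This is the main obstacle. We must show there are no wrong pairs at all. Suppose $uv$ is a wrong pair, say an edge inside a part or a non-edge between parts that should be complete. We compare $G$ with the graph obtained by flipping all wrong pairs at $u$, or by cloning a typical vertex of the same part onto $u$. The loss from a wrong pair is a positive constant times $n^{v(H)h-2}$, because a wrong pair destroys the induced copies that use both endpoints in their canonical roles. The potential gain, from copies of $H^{(h)}$ that use wrong pairs in non-canonical embeddings, requires a configuration that is far from a blowup embedding and hence involves at least two wrong pairs. Each vertex has only $o(n)$ wrong pairs, so this gain is of order $o(n^{v(H)h-2})$. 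Making this rigorous requires a counting lemma: for $h\ge h_*$, any induced copy of $H^{(h)}$ in a near-blowup that is not aligned with the partition must use many (say $\Omega(h)$) wrong pairs incident to a single vertex. Here the size of $h$ is used, following the induced analogue of the local lemmas in Hatami, Hirst and Norine. Once this is done, the clone-and-compare argument yields that $u$ has no wrong pairs. Applying it to every vertex shows $G$ is exactly a blowup of $H$.
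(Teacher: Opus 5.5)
Your architecture is the paper's: Hatami--Hirst--Norine stability, cloning-based local optimality, every vertex close to a type row, a forced-star lemma, then cleaning. Two things go wrong, one minor and one essential.

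\emph{The twin-free reduction.} You cannot simply assume $H$ is twin-free. With $H=R(\mathbf k)$, the counted graph is $H^{(h)}=R(h\mathbf k)$, an unbalanced blowup of $R$, not $R^{(h)}$. The argument must therefore be run for $R(h\mathbf k)$, and at the end an $R$-blowup with linear parts is refined into an $H$-blowup.

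\emph{The regime in Steps~1--2.} You claim, for fixed $h$, $o(n^2)$-closeness to an optimal blowup and hence $o(n)$ wrong adjacencies per vertex. The Hatami--Hirst--Norine results give less: $\mathrm{edit}(G^{u_G},H)\le\delta$ only for $h\ge h_1(\delta)$, and a rooted loss factor $\rho$ only for $h\ge h_2(\rho)$. So for a given $h$ you get $\Delta(G\symdiff B)\le\eta n$ with $\eta$ fixed and $h$ chosen after $\eta$, never $o(n)$. The ``uniqueness strengthening'' you need is not among their results, and your heuristic for it is not a proof. Proving it is essentially as hard as the theorem itself; granting it, a simpler accounting might work, but you have not supplied it.

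Step~2 has a related problem: ``smaller by a constant factor'' is not enough. For the types of a merely near-optimal $B$ you only know $T_t\le M_h/\lambda$. You therefore need a factor below $\lambda$, which again requires $h$ large, and also a lower bound on the part weights of the closest blowup.

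In the regime that is actually available, your Step~3 accounting does not close. Write $m=h|V(H)|$ and $K=|J|$.
\begin{itemize}
\item The guaranteed loss per wrong pair is only about ${\binom m2}^{-1}(\lambda/2)^{m-2}n^{m-2}$, which is exponentially small in $h$.
\item The gain is not $o(n^{m-2})$. Knowing only that a created copy contains two wrong pairs bounds the gain by $\mathrm{poly}(m)\,\eta\,Kn^{m-2}$. This is far larger than the loss bound once $h$ is large with $\eta$ fixed.
\item What makes the argument work is the forced star with $\ell\ge c_*h$ leaves at one vertex. It gives $P\le 2\cdot 3^m\eta^{\ell-1}Kn^{m-2}$. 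Then choose $\eta=\eta(H,\lambda)$ with $3^{|V(H)|}\eta^{c_*/3}<\tfrac12(\lambda/2)^{|V(H)|}$, so that $\eta^{\Omega(h)}$ beats $(\lambda/2)^{m}/m^2$. This is exactly why $\Omega(h)$ errors at a single vertex are needed, rather than two wrong pairs.
\end{itemize}

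The comparison must also be global. Count $L=|\Emb(F,B)\setminus\Emb(F,G)|$ and $P=|\Emb(F,G)\setminus\Emb(F,B)|$. Then every type-respecting copy in $B$ through a wrong pair counts as lost, with no need to avoid the other errors.

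Your local flip at a single vertex $u$ instead requires copies through the corrected pair to avoid all remaining errors. That costs only a factor $1-O(m^2\eta/\lambda)$ when $\eta\ll h^{-2}$, which is again incompatible with choosing $h$ after $\eta$.
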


Thus the question of Bollob\'as, Egawa, Harris and Jin has an affirmative answer for every graph $H$. The theorem is stated for induced copies; in the proof we count labelled induced embeddings, which differ by the fixed factor $|\Aut(H^{(h)})|$ and hence have the same extremal graphs.

The proof essentially follows the exact-stability framework of~\cite[Theorem~5.8]{Sliacan19}, with the weighted argument of Hatami, Hirst and Norine supplying the asymptotic input. The finite exact conclusion requires several additional steps. The first is a finite substitute for derivative optimality. With $F\coloneqq H^{(h)}$, if $G$ maximizes the number $N_F(G)$ of labelled induced embeddings, then replacing one vertex of $G$ by a clone of another vertex cannot increase $N_F(G)$. This gives, in \cref{lem:finite-local-optimality}, a finite local optimality condition with an error of order $O_F(1/n)$.

The second step turns the weighted input of Hatami, Hirst and Norine into a finite stability statement. Their direct results are weighted statements: a continuity estimate, their asymptotic maximum theorem, their global stability lemma, and a rooted stability lemma, recorded below in \cref{lem:weighted-continuity,prop:weighted-input,lem:weighted-rooted}. Writing $R\coloneqq\widetilde H$ for the twin-free quotient of $H$, so that $H=R(\mathbf k)$, we combine \cref{lem:finite-local-optimality} with quotient and rooted adaptations of their weighted machinery to prove \cref{prop:finite-stability}: for every small fixed $\eta>0$, provided first $h$ and then $n$ are large enough, every exact extremal graph $G$ is close to a blowup $B$ of $R$ on the same vertex set, with all parts of linear size and with $\Delta(G\symdiff B)\le \eta n$.

The third step is the exact cleaning argument. Put $J\coloneqq G\symdiff B$. Each wrong pair in $J$ destroys many type-respecting embeddings present in $B$, while \cref{lem:many-forced-errors} implies that any embedding created by the error graph must contain a large star in $J$. Since $\Delta(J)\le\eta n$, \cref{lem:cleaning} shows that the lost embeddings dominate the created embeddings once $\eta$ is chosen sufficiently small and $h$ sufficiently large. Hence a non-empty error graph would contradict extremality, so $G=B$. The refinement from the twin-free quotient back to $H$ is then immediate from the linear lower bound on the quotient parts.

\medskip

\cref{sec:notation} fixes notation, records the weighted results of Hatami, Hirst and Norine, proves the quotient form of the rooted result, and proves the finite local-optimality lemma. \cref{sec:proof} proves \cref{thm:main}: \cref{sec:finite-stability} derives the finite local blowup structure from these results, \cref{sec:cleaning} proves the cleaning lemma, and \cref{sec:main-proof} completes the proof.

\section{Preliminaries}
\label{sec:notation}

For a graph $G$, write $d_G(x)$ for the degree of $x$ in $G$ and $\Delta(G)$ for the maximum degree of $G$.
We identify a graph with its edge set; thus $|G|$ denotes the number of edges of $G$.
We write $A_G(x,y)$ for the $(x,y)$-entry of the adjacency matrix of $G$, with zero diagonal. Thus $A_G(x,y)=1$ if $xy\in G$ and $A_G(x,y)=0$ otherwise.

For graphs $F$ and $G$, let $\Emb(F,G)$ be the set of injective maps $\phi:V(F)\to V(G)$ such that, for all $x,y\in V(F)$, $xy\in F$ if and only if $\phi(x)\phi(y)\in G$.
Put $N_F(G)\coloneqq|\Emb(F,G)|$. If $m\coloneqq |V(F)|$ and $n\coloneqq |V(G)|\ge m$, set
\[
        p_F(G)\coloneqq {N_F(G)}/{(n)_m},
\]
where $(n)_m\coloneqq n(n-1)\cdots(n-m+1)$. Thus $p_F(G)$ is the probability that a uniform random injective map $V(F)\hookrightarrow V(G)$ is an induced embedding.
All embedding counts are labelled. Thus
\[
        N_F(G)=|\Aut(F)|\,I(F,G).
\]

A map $\phi:V(F)\to V(G)$ is a \emph{strong homomorphism} if it preserves both edges and non-edges:
\[
        A_F(x,y)=A_G(\phi(x),\phi(y))
        \qquad\text{for all }x,y\in V(F).
\]
Thus an induced embedding is precisely an injective strong homomorphism. The only difference is that a strong homomorphism may identify distinct vertices, whereas an induced embedding may not.

Two vertices in $H$ are called \emph{twins} if they have the same neighbourhood (and thus are not adjacent). This is an equivalence relation on $V(H)$, and its equivalence classes are independent sets. The \emph{twin-free quotient} $\widetilde H$ is the graph whose vertices are the twin classes of $H$, with two classes adjacent when, equivalently, every pair of vertices between them is adjacent.
Writing $R\coloneqq\widetilde H$, for each $x\in V(R)$, let $\mathbf k(x)$ be the size of the corresponding twin class in $H$. We write
\[
        \|\mathbf k\|_1\coloneqq\sum_{x\in V(R)}\mathbf k(x),
        \qquad
        \|\mathbf k\|_\infty\coloneqq\max_{x\in V(R)}\mathbf k(x).
\]
Then $H$ is the blowup $R(\mathbf k)$, and hence $H^{(h)}=R(h\mathbf k)$.
The natural projection $\pi:R(h\mathbf k)\to R$, which sends each blown-up vertex to the quotient vertex whose part contains it, is called the \emph{type map}.

We shall also use the following elementary fact several times.

\begin{lemma}
\label{lem:twin-free-auto}
If $R$ is twin-free, then every strong homomorphism $R\to R$ is an automorphism.
\end{lemma}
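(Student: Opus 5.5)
The plan is to show first that a strong homomorphism $\phi:R\to R$ of a twin-free graph is injective. Since $V(R)$ is finite, injectivity makes $\phi$ a bijection. Because a strong homomorphism preserves both adjacency and non-adjacency, $\phi$ and its inverse then both preserve edges, so $\phi$ is an automorphism.

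For injectivity, suppose $\phi(x)=\phi(y)$ for some distinct $x,y\in V(R)$; I aim to show that $x$ and $y$ are twins. First, $x$ and $y$ are non-adjacent. Indeed, $A_R(x,y)=A_R(\phi(x),\phi(y))=A_R(\phi(x),\phi(x))=0$, because the adjacency matrix has zero diagonal. Next, take any $z\in V(R)$. Applying strong homomorphism preservation twice gives
\[
A_R(x,z)=A_R(\phi(x),\phi(z))=A_R(\phi(y),\phi(z))=A_R(y,z).
\]
This also covers the case $z\in\{x,y\}$ consistently, since both sides vanish there. Hence $x$ and $y$ have the same neighbourhood, so they are twins. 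This contradicts the assumption that $R$ is twin-free. Therefore $\phi$ is injective, and so bijective.

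Finally, $\phi$ is bijective and satisfies $A_R(x,y)=A_R(\phi(x),\phi(y))$ for all $x,y$. By definition this makes $\phi$ a graph automorphism, and the edge-preservation of $\phi^{-1}$ follows from the same identity. There is no genuine obstacle in this argument. The only point needing care is to invoke the zero-diagonal convention, which is what rules out an edge between two vertices that $\phi$ identifies.
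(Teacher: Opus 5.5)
Your proposal is correct and matches the paper's proof: $\phi(x)=\phi(y)$ forces $A_R(x,z)=A_R(y,z)$ for all $z$, so $x$ and $y$ are twins. This gives injectivity, bijectivity follows by finiteness, and $\phi$ is an automorphism because it preserves both adjacency and non-adjacency. Your separate non-adjacency check is harmless but redundant, since equal neighbourhoods already rule out an edge $xy$.
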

\begin{proof}
If $\phi(x)=\phi(y)$, then for every $z\in V(R)$, $A_R(x,z)=A_R(\phi(x),\phi(z))=A_R(\phi(y),\phi(z))=A_R(y,z)$. Thus $x$ and $y$ are twins, and so $x=y$. Hence $\phi$ is injective and therefore bijective. Since $\phi$ preserves adjacency and non-adjacency, it is an automorphism.
\end{proof}

We recall the weighted notions of Hatami, Hirst and Norine~\cite[Section~2.1]{HatamiHirstNorine} that are used below.
For a finite set $S$, put
\[
        \mathcal P(S)\coloneqq
        \Big\{ \bmu:S\to[0,1]\;:\;\sum_{x\in S}\bmu(x)=1 \Big\},
        \quad\text{and}\quad
        M(S)\coloneqq\{\bmu\in\mathcal P(S):\bmu(x)>0\text{ for every }x\in S\}.
\]
Thus, $M(S)$ denotes the full-support part of the simplex $\mathcal P(S)$; the closed simplex $\mathcal P(S)$ will only be needed when taking closest representatives.

Recall that a \emph{weighted graph} $X^\bmu$ is a finite graph $X$ together with a probability measure $\bmu\in\mathcal P(V(X))$. If no measure is displayed, we use the uniform measure and write it as $u_X$.
Throughout, when discussing equivalence and edit distance, weighted graphs are identified with their reduced representatives obtained by deleting all zero-weight vertices.
For a weighted graph $X^\bmu$, let $s(F;X^\bmu)$ be the probability that a random map $V(F)\to V(X)$, chosen independently according to $\bmu$, is a strong homomorphism. We also use a \emph{rooted density}. For $x\in V(X)$, let $T_x(F;X^\bmu)$ be the probability that the following experiment gives a strong homomorphism: choose a uniform root $y\in V(F)$, force $y$ to be mapped to $x$, and map all other vertices independently according to $\bmu$. Then
\begin{equation}
\label{eq:root-average}
        s(F;X^\bmu)=\sum_{x\in V(X)}\bmu(x)T_x(F;X^\bmu).
\end{equation}

Two weighted graphs $X^\bmu$ and $Y^\bnu$ are \emph{equivalent}, written $X^\bmu\sim Y^\bnu$, if there is an isomorphism $\theta:\widetilde X\to\widetilde Y$ between their twin-free quotients such that, for every quotient vertex $x\in V(\widetilde X)$, the total $\bmu$-weight of the twin class of $x$ equals the total $\bnu$-weight of the twin class of $\theta(x)$. They are \emph{commensurable} if they have the same vertex set and the same probability measure.
A \emph{weighted blowup} of $X^\bmu$ means any weighted graph equivalent to $X^\bmu$.

The \emph{weighted edit distance} between two weighted graphs $X^\bmu$ and $Y^\bnu$ is
\[
        \mathrm{edit}(X^\bmu,Y^\bnu)\coloneqq
        \min \sum_{x,y\in V(W_1)} \bomega(x)\bomega(y)
        \left|A_{W_1}(x,y)-A_{W_2}(x,y)\right|,
\]
where the minimum is over all commensurable weighted graphs $W_1^\bomega$ and $W_2^\bomega$ such that $W_1^\bomega\sim X^\bmu$ and $W_2^\bomega\sim Y^\bnu$.
The sum is over ordered pairs; this normalization is immaterial for us. Zero-weight vertices may still be introduced temporarily during refinements, but they are then ignored when passing back to the reduced representative.

\begin{lemma}[{\cite[Equation~(2.1)]{HatamiHirstNorine}}]
\label{lem:weighted-continuity}
For every finite graph $F$ and weighted graphs $X^\bmu,Y^\bnu$,
\[
        |s(F;X^\bmu)-s(F;Y^\bnu)|
        \le |V(F)|^2 \mathrm{edit}(X^\bmu,Y^\bnu).
\]
\end{lemma}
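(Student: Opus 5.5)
We prove the continuity estimate by reducing to a single commensurable pair and then applying a union bound over the pairs of vertices of $F$. Write $m\coloneqq|V(F)|$. The density $s(F;\cdot)$ is invariant under equivalence. Indeed, adding or deleting zero-weight vertices does not change the distribution of the random map. Moreover, if $X^\bmu\sim Y^\bnu$, then composing the random map with the projection to the twin-free quotient gives the same induced random map into $\widetilde X\cong\widetilde Y$, with the same class weights. Whether a map into $X$ is a strong homomorphism depends only on its projection to $\widetilde X$, except for one point: two vertices of $F$ sent to twins in the same class are non-adjacent in $X$. This holds whether the two images are equal or distinct twins. So the event is determined by the quotient map. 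Hence $s(F;X^\bmu)$ equals the probability that a random map into $\widetilde X$, weighted by class weights, is a strong homomorphism, with a loop-free convention on each class. Consequently, for the commensurable pair $W_1^\bomega\sim X^\bmu$ and $W_2^\bomega\sim Y^\bnu$ attaining the minimum in the definition of $\mathrm{edit}$ (the minimum is attained or approximated; an $\eps$-argument suffices), we have $s(F;X^\bmu)=s(F;W_1^\bomega)$ and $s(F;Y^\bnu)=s(F;W_2^\bomega)$.

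Next we couple the two densities. Choose one random map $\phi:V(F)\to V(W_1)=V(W_2)$ with independent coordinates distributed according to $\bomega$, and use it for both graphs. The indicator events $E_i=\{\phi$ is a strong homomorphism into $W_i\}$ then satisfy
\[
|s(F;W_1^\bomega)-s(F;W_2^\bomega)|=|\Pr(E_1)-\Pr(E_2)|\le\Pr(E_1\symdiff E_2).
\]
If the two events differ, there is a pair $x\ne y$ in $V(F)$ with $A_{W_1}(\phi(x),\phi(y))\ne A_{W_2}(\phi(x),\phi(y))$. When $\phi(x)=\phi(y)$, both adjacency values are $0$ by the zero-diagonal convention, so such a pair cannot cause a disagreement. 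By the union bound over the at most $m^2$ ordered pairs $(x,y)$, and since $(\phi(x),\phi(y))$ is distributed as $\bomega\times\bomega$, we get
\[
\Pr(E_1\symdiff E_2)\le m^2\sum_{u,v}\bomega(u)\bomega(v)\,|A_{W_1}(u,v)-A_{W_2}(u,v)|=m^2\,\mathrm{edit}(X^\bmu,Y^\bnu).
\]
(Using unordered pairs would even give $\binom m2$.)

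The only delicate step is the first one, the invariance of $s$ under equivalence. Here I would check carefully that refining a vertex into twins preserves the law of the event. The check is that the adjacency pattern of $\phi$ into a twin class is identical to the pattern obtained when everything is mapped to a single vertex. This is exactly where the zero-diagonal convention, and the fact that twins are non-adjacent, are used. The rest is a direct union bound.
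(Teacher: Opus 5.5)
Your proof is correct: invariance of $s(F;\cdot)$ under equivalence (within a twin class, distinct twins and coincident images both give adjacency $0$), then running both commensurable representatives on one shared random map and a union bound over pairs of $V(F)$, gives the bound, even with $\binom{|V(F)|}{2}$ in place of $|V(F)|^2$. The paper gives no argument of its own here and imports the inequality from Hatami--Hirst--Norine's Equation~(2.1); your coupling argument is the standard justification behind that citation.
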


The next proposition records two weighted consequences of the work of Hatami, Hirst and Norine that will be used below: their maximum theorem and their global stability lemma. It is not yet an exact finite statement; the finite adaptation is carried out in \cref{sec:finite-stability}.

\begin{proposition}[{\cite[Theorem~3.2 and Lemma~5.1]{HatamiHirstNorine}}]
\label{prop:weighted-input}
Let $H=R(\mathbf k)$, where $R$ is twin-free, and put $F_h\coloneqq H^{(h)}=R(h\mathbf k)$.
\begin{enumerate}[label=\textup{(\alph*)},ref=\theproposition\textup{(\alph*)}]
\item\label{prop:weighted-input:maximum} There is $h_0$ such that, for every $h\ge h_0$,
\[
        \sup_{X^\bmu} s(F_h;X^\bmu)=\max_{\bnu\in M(V(R))} s(F_h;R^\bnu),
\]
where the supremum is over all finite weighted graphs $X^\bmu$.
\item\label{prop:weighted-input:global} For every $\delta>0$ there is $h_1=h_1(\delta)$ such that, whenever $h\ge h_1$ and $s(F_h;X^\bmu)\ge \frac12 s(F_h;H)$, we have $\mathrm{edit}(X^\bmu,H)\le\delta$.
\end{enumerate}
\end{proposition}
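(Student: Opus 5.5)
The plan is to read off both parts from the cited results of Hatami, Hirst and Norine. The only genuine work is translating their normalizations and their use of $H$ into our quotient language with $R=\widetilde H$. The key observation is that the density $s(F;\cdot)$ is invariant under equivalence of weighted graphs. If $X^\bmu\sim Y^\bnu$, then $\mathrm{edit}(X^\bmu,Y^\bnu)=0$, because we may take $W_1^\bomega=W_2^\bomega$ to be a common refinement. \cref{lem:weighted-continuity} then gives $s(F;X^\bmu)=s(F;Y^\bnu)$. In particular, for any $\bnu\in M(V(H))$, let $\bnu'\in M(V(R))$ assign to each quotient vertex the total $\bnu$-weight of its twin class. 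Then $H^\bnu\sim R^{\bnu'}$, so $s(F_h;H^\bnu)=s(F_h;R^{\bnu'})$. Conversely, every $\bnu'\in M(V(R))$ arises this way, for instance by splitting each class weight equally.

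For part (a), I would apply \cite[Theorem~3.2]{HatamiHirstNorine}, i.e.\ the theorem quoted in the introduction. It gives $h_0$ such that for $h\ge h_0$ there is $\bnu\in M(V(H))$ with $\sup_{X^\bmu}s(F_h;X^\bmu)=s(F_h;H^\bnu)$. Passing to $\bnu'$ as above shows that the supremum is attained on $\{R^{\bnu'}:\bnu'\in M(V(R))\}$. Since each $s(F_h;R^{\bnu'})$ is trivially at most the supremum, the supremum equals the maximum over $M(V(R))$, and in particular that maximum exists. Full support of $\bnu'$ follows because every twin class contains a vertex of positive $\bnu$-weight.

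For part (b), I would invoke \cite[Lemma~5.1]{HatamiHirstNorine}, their global stability lemma. In their formulation it says that for every $\delta>0$ there is $h_1$ such that, for $h\ge h_1$, any weighted graph whose $F_h$-density is at least a fixed fraction of the density attained by a blowup of $H$ lies within weighted edit distance $\delta$ of $H$. The steps are as follows.
\begin{enumerate}
\item Check that our hypothesis $s(F_h;X^\bmu)\ge\frac12 s(F_h;H)$ implies theirs. Here $H$ carries the uniform measure $u_H$, and $s(F_h;H)\le\max_{\bnu}s(F_h;R^\bnu)$ by part (a). So if their threshold is phrased relative to the maximum, or to any density at most $s(F_h;H)$, the implication is immediate, possibly after halving $\delta$ or adjusting the constant.
\item Check that their edit distance agrees with ours up to the harmless factor $2$ from summing over ordered rather than unordered pairs; this factor is absorbed by replacing $\delta$ with $\delta/2$.
\item Check that their target graph is the same equivalence class as $H$ with the intended weights. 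Equivalence classes are determined by twin-free quotients, so closeness to $H$ is the same as closeness to $R$ with the induced class weights.
\end{enumerate}

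I expect the main obstacle to be the third step, namely making sure which weighting on $H$ their stability lemma produces. If their conclusion is closeness to \emph{some} blowup $H^{\bnu}$ with $\bnu$ bounded away from zero, rather than to $H$ itself, then the statement should be read with this normalization, and I would record the conversion explicitly. The fallback route is a compactness argument. Suppose the conclusion failed for some fixed $\delta$ along a sequence $h\to\infty$. The weights $\bnu$ lie in a compact simplex. Combining \cref{lem:weighted-continuity} with the lower bound on $s(F_h;X^\bmu)$ would then contradict their lemma along a convergent subsequence.
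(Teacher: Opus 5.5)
Your proposal follows the paper, which likewise just reads both parts off from Hatami--Hirst--Norine's Theorem~3.2 and Lemma~5.1. Your equivalence-invariance step $H^{\bnu}\sim R^{\bnu'}$ is exactly the implicit translation behind part~(a), and for part~(b) the paper notes that $\frac12 s(F_h;H)$ and the target $H$ are precisely the threshold and conclusion of their lemma, so no conversion is needed.

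Your hedges therefore never come into play, which is fortunate, since two of them do not work as written. First, one is backwards: because $s(F_h;H)\le\max_{\bnu}s(F_h;R^{\bnu})$, a threshold stated relative to the maximum would \emph{not} follow from ours without further input. Second, the compactness fallback would fail as sketched: \cref{lem:weighted-continuity} loses $|V(F_h)|^2\delta$ additively, while $s(F_h;H)$ decays exponentially in $h$ once $|V(R)|\ge2$.
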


Here \cref{prop:weighted-input:global} is Hatami, Hirst and Norine's global stability lemma in its relative-threshold form; the comparison value $\frac12 s(F_h;H)$ is the threshold appearing in that stability statement.

We shall need one more result of Hatami, Hirst and Norine, namely their rooted stability lemma~\cite[Lemma~5.3]{HatamiHirstNorine}. Since their Lemma 5.3 is stated for weighted blowups of the original graph $H$ and uses a single parameter to control regularity, the multiplicative loss, and the minimum part weight, we record explicitly the quotient form needed here. The proof is just the parameter conversion from their statement to the two-parameter form below.

\begin{lemma}[{\cite[Lemma~5.3]{HatamiHirstNorine}}]
\label{lem:weighted-rooted}
Let $H=R(\mathbf k)$, where $R$ is twin-free, and put $F_h\coloneqq H^{(h)}=R(h\mathbf k)$. For every $\rho,\lambda>0$ there are $h_2=h_2(\rho,\lambda)$ and $\delta>0$ such that the following holds for $h\ge h_2$. Suppose $X^\bmu$ and $B^\bmu$ are commensurable weighted graphs, $B^\bmu$ is equivalent to a weighted blowup $R^\bnu$, $\min_{y\in V(R)}\bnu(y)\ge\lambda$, and $\mathrm{edit}(X^\bmu,B^\bmu)\le\delta$. If a positive-weight vertex $x\in V(X)$ has no row of $B$ within $\rho$ in $\bmu$-measure, that is, if
\[
        \bmu\{z:A_X(x,z)\ne A_B(y,z)\}>\rho
        \qquad\text{for every }y\in V(B),
\]
then
\[
        T_x(F_h;X^\bmu)\le \rho\,T_x(F_h;B^\bmu).
\]
\end{lemma}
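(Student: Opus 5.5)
This will be a direct translation of \cite[Lemma~5.3]{HatamiHirstNorine}; the only work is converting their single-parameter statement into the two-parameter quotient form. Their lemma, as I read it, says roughly this. For every $\eps>0$ there is $h(\eps)$ such that for $h\ge h(\eps)$ the following holds. Let $B^\bmu$ be a weighted blowup $H^{\bomega}$ of $H$ with every part of weight at least $\eps$, let $X^\bmu$ be commensurable with $B^\bmu$ and satisfy $\mathrm{edit}(X^\bmu,B^\bmu)\le\eps$, and let $x$ disagree in $\bmu$-measure more than $\eps$ with every row of $B$. Then $T_x(F_h;X^\bmu)\le\eps\,T_x(F_h;B^\bmu)$. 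Given $\rho,\lambda$, I set
\[
\eps\coloneqq\min\{\rho,\ \lambda/\|\mathbf k\|_\infty\},
\]
and take $h_2\coloneqq h(\eps)$ and $\delta\coloneqq\eps$. Then I check each hypothesis of their lemma.

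\textbf{Step 1: pass from $R$ to $H$.} A weighted blowup $R^\bnu$ is equivalent to the weighted graph $H^{\bomega}$ in which the weight $\bnu(y)$ of each $y\in V(R)$ is split equally among the $\mathbf k(y)$ twins in its class. Both have twin-free quotient $R$, and the class weights agree, so $H^{\bomega}\sim R^\bnu\sim B^\bmu$. Each vertex of $H$ then has weight at least $\lambda/\mathbf k(y)\ge\lambda/\|\mathbf k\|_\infty\ge\eps$. So the minimum-part-weight hypothesis, phrased for blowups of $H$, holds with parameter $\eps$.

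\textbf{Step 2: the other hypotheses.} The edit hypothesis holds since $\delta=\eps$. The row condition with threshold $\rho\ge\eps$ implies the one with threshold $\eps$. The conclusion with factor $\eps\le\rho$ implies the one with factor $\rho$. Since $F_h=H^{(h)}$ is literally the same graph as $R(h\mathbf k)$, the densities match.

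\textbf{Step 3: invariance under equivalence.} I need $T_x$ and the rows of $B$ to be insensitive to the choice of representative. I only apply their lemma to the given commensurable pair $X^\bmu,B^\bmu$ on the common vertex set, with the equivalence used solely to certify that $B^\bmu$ is a weighted blowup of $H$ with large parts. Nothing needs to be re-represented, so $T_x(F_h;B^\bmu)$ is computed on the same space.

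\textbf{Main obstacle.} The only delicate point is confirming that their lemma is indeed stated for an arbitrary commensurable representative $B^\bmu$ of a blowup, not only for $H^{\bomega}$ itself. If it is stated only for $H^{\bomega}$, I would handle it as follows. First refine the vertex sets so that $B$ is a blowup of $H^{\bomega}$ whose vertices are partitioned into twin classes. The row condition is preserved because rows of twins coincide. Then rooted densities at $x$ are unchanged by splitting vertices into twins, since $T_x$ depends only on measures of adjacency classes. Rerunning their argument on the refined representative then gives the claim.
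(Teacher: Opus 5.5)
Your proposal follows the same route as the paper. Both take $\eps_0=\min\{\rho,\lambda/\|\mathbf k\|_\infty\}$, split each quotient weight $\bnu(y)$ equally over the $\mathbf k(y)$ twins so that $B^\bmu\sim H^{\widehat\bnu}$ with every weight at least $\eps_0$, and use monotonicity in both the row threshold and the loss factor.

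\textbf{One step to correct.} You set $\delta\coloneqq\eps$ and say the edit hypothesis ``holds since $\delta=\eps$''. In Hatami--Hirst--Norine's Lemma~5.3, as the paper uses it, the single parameter $\eps_0$ governs only three things: the regularity threshold, the multiplicative loss, and the minimum part weight. The edit-distance threshold is a separate output: applied with $\eps_0$, the lemma returns both $h_2$ and some $\delta>0$, which may be much smaller than $\eps_0$. So you cannot choose $\delta$ yourself. Simply take the $\delta$ the lemma returns; the statement only asserts that some $\delta$ exists, so nothing else changes.

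\textbf{Two pieces of bookkeeping.} You assume these silently; the paper makes them explicit.
\begin{enumerate}
\item The densities are not literally the same. Their rooted density $s(\partial H^{(h)},1\mapsto x;W^\bmu)$ sums over the choice of root, so it equals $|V(F_h)|\,T_x(F_h;W^\bmu)$. The factor appears on both sides and cancels.
\item Zero-weight vertices of the commensurable pair should be deleted before invoking their lemma. This is harmless. Since $x$ has positive weight it survives, and the deletion changes neither the edit distance nor the rooted densities at $x$. The ``no row within $\rho$'' hypothesis also survives, because it is universal over rows and restricting to fewer rows keeps it true.
\end{enumerate}

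\textbf{Your ``main obstacle''.} The fallback you sketch there (refining and rerunning their argument) is not needed. Their lemma is already stated for a commensurable pair in which $B^\bmu$ is merely equivalent to a weighted blowup of $H$, so it applies directly to your pair.
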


\begin{proof}
Choose $\eps_0$ such that $0<\eps_0\le \min\{\rho,\lambda/\|\mathbf k\|_\infty\}$.
Split the quotient weight $\bnu(y)$ among the $\mathbf k(y)$ vertices of the twin class of $H$ lying over $y$; for instance, put
\[
        \widehat{\bnu}(x)\coloneqq \frac{\bnu(y)}{\mathbf k(y)}
        \qquad\text{whenever }x\in V(H)\text{ lies over }y\in V(R).
\]
Then every coordinate of $\widehat{\bnu}$ is at least $\lambda/\|\mathbf k\|_\infty$, and hence at least $\eps_0$. Thus the minimum vertex weight condition in Hatami, Hirst and Norine's lemma is satisfied. Since vertices lying over the same quotient vertex are twins, the weighted graph $B^\bmu$ is also equivalent to the weighted blowup $H^{\widehat{\bnu}}$.

Before applying their lemma, delete all zero-weight vertices from the common vertex set of the commensurable pair. Since $x$ has positive weight, it remains after this deletion. The deletion does not change the edit distance, the equivalence class of $B^\bmu$, the rooted densities at $x$, or any of the row distances from $x$ to the remaining rows of $B$.

Hatami, Hirst and Norine's Lemma 5.3 says, in their notation, that if a vertex is not $\eps_0$-regular with respect to a commensurable pair consisting of a weighted graph and a nearby weighted blowup of $H^{\widehat{\bnu}}$, then its rooted density is smaller by a factor at most $\eps_0$. In this reduced commensurable representative, their row distance $d_1(x,y)$ is exactly
\[
        d_1(x,y)=
        \sum_{z\in V(B)}\bmu(z)\left|A_X(x,z)-A_B(y,z)\right|
        =\bmu\{z:A_X(x,z)\ne A_B(y,z)\}.
\]
Thus their $\eps_0$-regularity condition is precisely the existence of some row of $B$ within $\eps_0$ in the above sense. Apply that lemma to the graph $H$ with parameter $\eps_0$, obtaining $h_2$ and $\delta$. If $x$ has no row of $B$ within $\rho$ in $\bmu$-measure, then, because $\eps_0\le\rho$, it has no row of $B$ within $\eps_0$ either. In their terminology, $x$ is therefore not $\eps_0$-regular with respect to the reduced commensurable pair $(X,B,\bmu)$.

In Hatami, Hirst and Norine's partially labelled notation, $\partial H^{(h)}$ denotes the sum of the one-vertex partial labellings of $H^{(h)}$. Thus their rooted density is the sum of the contributions obtained by fixing each possible root at $x$, whereas our $T_x$ averages over the root. Equivalently, for $W=X,B$,
\[
        s(\partial H^{(h)},1\mapsto x;W^\bmu)
        = |V(F_h)|\,T_x(F_h;W^\bmu).
\]
Their lemma gives
\[
        s(\partial H^{(h)},1\mapsto x;X^\bmu)
        \le \eps_0\,s(\partial H^{(h)},1\mapsto x;B^\bmu).
\]
Using the displayed identity for $W=X$ and $W=B$, and using $\eps_0\le\rho$, we obtain
\[
        T_x(F_h;X^\bmu)\le \rho\,T_x(F_h;B^\bmu),
\]
as required.
\end{proof}

The following elementary lemma replaces the derivative optimality condition in the weighted argument. It follows by comparing an extremal graph with the graph obtained after cloning one vertex by another.

\begin{lemma}
\label{lem:finite-local-optimality}
Let $F$ be a fixed graph, with $m\coloneqq |V(F)|$. There is a constant $C_F$ such that for every $n\ge2m$, if $G$ is an $n$-vertex graph maximizing $N_F(G)$, then
\[
        T_x(F;G^{u_G})\ge p_F(G)-\frac{C_F}{n}
        \qquad\text{for all }x\in V(G),
\]
where $u_G$ is the uniform probability measure on $V(G)$. One may take $C_F=4m^2$.
\end{lemma}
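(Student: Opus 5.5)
We compare $G$ with the graph $G'$ obtained by cloning one vertex onto another. Fix $x\in V(G)$ and choose $w\in V(G)$ minimizing $T_w(F;G^{u_G})$. By \eqref{eq:root-average} with the uniform measure, $s(F;G^{u_G})$ is the average of the $T_v$, so $T_w\le s(F;G^{u_G})$. Since $p_F(G)$ and $s(F;G^{u_G})$ differ by $O(m^2/n)$ (collisions have probability at most $\binom m2/n$), it suffices to show that $T_x\ge T_w - O(m^2/n)$ for every $x$. We may assume $x\ne w$.

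Let $G'$ be obtained from $G$ by deleting all edges at $w$ and then joining $w$ to exactly $N_G(x)\setminus\{w\}$, so that $w$ becomes a non-adjacent twin of $x$. By extremality, $N_F(G')\le N_F(G)$. We count embeddings according to whether their image contains $w$.

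\textbf{Embeddings avoiding $w$.} These are common to $G$ and $G'$.

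\textbf{Embeddings using $w$ in $G$.} Their number is $m\,(n-1)_{m-1}$ times the probability that, for a uniform root $y\in V(F)$ sent to $w$ and a uniform injection of the other vertices into $V(G)\setminus\{w\}$, we obtain an induced embedding. Comparing with $T_w$, which uses independent maps into all of $V(G)$, the two differ by at most $m^2/n$: this bounds the probability of a collision or of hitting $w$.

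\textbf{Embeddings using $w$ in $G'$.} Restrict to those whose image avoids $x$. In $G'$, replacing $w$ by $x$ then gives a bijection with the embeddings of $G$ that use $x$ and avoid $w$, since $w$ and $x$ have the same adjacencies to everything else. Their number is therefore at least $m(n-1)_{m-1}\,(T_x-m^2/n)$, by the same comparison plus removal of the images containing $w$.

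Combining the three counts with $N_F(G')\le N_F(G)$ and dividing by $m(n-1)_{m-1}$ gives $T_x-m^2/n\le T_w+m^2/n$. I need the reverse inequality, so the roles must be swapped: clone $x$ onto $w$ instead. Let $G''$ be obtained by making $x$ a twin of $w$. The same computation then yields
\[
T_w\ge T_x-\frac{2m^2}{n}\quad\text{and}\quad N_F(G'')\le N_F(G),
\]
and hence
\[
T_x\;\ge\; T_w-\frac{2m^2}{n}\;\ge\; \min_v T_v-\frac{2m^2}{n}.
\]
Here the embeddings of $G''$ through $x$ but not $w$ correspond bijectively to those of $G$ through $w$ but not $x$, so $N_F(G'')-N_F(G)$ is at least $m(n-1)_{m-1}\bigl(T_w-T_x-O(m^2/n)\bigr)$.

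Applying this with $x$ arbitrary and $w$ ranging over all vertices shows that all the $T_v$ lie within $2m^2/n$ of one another. In particular each $T_x\ge s(F;G^{u_G})-2m^2/n$.

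Finally, $s(F;G^{u_G})\ge (n)_m/n^m\cdot p_F(G)\ge p_F(G)-\binom m2/n$. Adding the errors gives $C_F=4m^2$, using $n\ge 2m$ to keep the falling-factorial ratios controlled. The only delicate step is the bookkeeping of the $O(m^2/n)$ terms when converting between injective counts and the independent-sampling quantity $T_x$; everything else is the exact twin bijection.
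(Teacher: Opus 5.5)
Your proof reaches the right conclusion by the paper's method: replace one vertex by a non-adjacent clone of another, use the twin bijection between embeddings through the clone and embeddings of $G$ through the copied vertex that avoid the replaced one, and invoke extremality. Your $O(m^2/n)$ bookkeeping holds as stated, and $C_F=4m^2$ has room to spare.

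The framing at the start, however, is backwards and should be removed. If $w$ minimizes $T_w$, then $T_x\ge T_w$ is trivial, and $T_w\le s(F;G^{u_G})$ is an upper bound. So proving $T_x\ge T_w-O(m^2/n)$ for that $w$ does not suffice, and your displayed chain ending in $\min_v T_v-2m^2/n$ proves nothing. There is also a mislabelling: the computation with $G''$ (making $x$ a twin of $w$) gives $T_x\ge T_w-2m^2/n$, not $T_w\ge T_x-2m^2/n$ as you first write; the latter is what $G'$ gave.

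What actually proves the lemma is your final paragraph. Since $T_x\ge T_w-2m^2/n$ for every $w\ne x$, take $w$ maximizing $T_w$. Averaging gives $\max_w T_w\ge s(F;G^{u_G})$, hence $T_x\ge s(F;G^{u_G})-2m^2/n$. This is exactly the paper's choice of a comparison vertex $x'$ with $q_{x'}\ge p_F(G)$.

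The only difference in route is normalization. The paper averages the injective rooted counts $q_x$, whose mean is exactly $p_F(G)$, and converts to $T_x$ once at the end, so it never needs $s(F;G^{u_G})$. Your version pays one extra conversion, $s(F;G^{u_G})\ge p_F(G)-\binom m2/n$. In exchange, by using both orderings of each pair, it also shows that all the $T_v$ lie within $2m^2/n$ of one another.
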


\begin{proof}
Let $\mathcal E\coloneqq\Emb(F,G)$. For $x\in V(G)$, let $Q_x$ be the number of pairs $(y,\phi)\in V(F)\times\mathcal E$ with $\phi(y)=x$, and put
\[
        q_x\coloneqq\frac{Q_x}{m(n-1)_{m-1}}.
\]
Since the maps in $\mathcal E$ are injective, $Q_x$ is equivalently the number of embeddings whose image contains $x$, counted with the unique preimage of $x$.
Then $n^{-1}\sum_x q_x=p_F(G)$. Fix $x\in V(G)$. If $q_x\ge p_F(G)$, there is nothing to prove at this stage. Otherwise choose $x'\ne x$ with $q_{x'}\ge p_F(G)$. Form $G'$ from $G$ by replacing $x$ with a non-adjacent clone of $x'$: delete all edges incident with $x$, join $x$ to $z\notin\{x,x'\}$ exactly when $x'$ is joined to $z$ in $G$, and leave $xx'$ as a non-edge. Embeddings not using $x$ are unchanged, and embeddings using $x$ but not $x'$ in $G'$ correspond to embeddings using $x'$ but not $x$ in $G$. The only maps not accounted for are those using both specified vertices, at most $m(m-1)(n-2)_{m-2}$. Since $G$ is extremal,
\[
        Q_x\ge Q_{x'}-m(m-1)(n-2)_{m-2},
\]
and hence $q_x\ge p_F(G)-2m/n$ for $n\ge2m$. Thus, in all cases,
\[
        q_x\ge p_F(G)-\frac{2m}{n}.
\]

Finally, the rooted map defining $T_x(F;G^{u_G})$ is non-injective only if some non-root vertex maps to $x$ or two non-root vertices collide. This has probability at most $m^2/n$. Therefore
\[
        |T_x(F;G^{u_G})-q_x|\le \frac{m^2}{n},
\]
which proves the lemma, after enlarging the constant to $4m^2$.
\end{proof}

We shall also need to know that a closest weighted quotient blowup has no vanishing part. This is what later lets us round weighted parts to linear-sized finite parts.

\begin{lemma}
\label{lem:boundary}
Let $H=R(\mathbf k)$, with $R$ twin-free. There are constants $\lambda_R,\delta_R>0$, depending only on $H$, with the following property. If $\mathrm{edit}(X^\bmu,H)\le\delta_R$ and $\bnu\in\mathcal P(V(R))$ minimizes $\mathrm{edit}(X^\bmu,R^\bnu)$, then $\bnu(x)\ge\lambda_R$ for every $x\in V(R)$.
\end{lemma}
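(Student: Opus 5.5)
Since $H=R(\mathbf k)$, the uniform weighted graph $H^{u_H}$ is equivalent to $R^{\bnu_0}$, where $\bnu_0(y)\coloneqq\mathbf k(y)/\|\mathbf k\|_1$. In particular $\min_y\bnu_0(y)\ge 1/\|\mathbf k\|_1$. The plan is to show that any quotient blowup $R^\bnu$ close to $X^\bmu$ must also be close to $R^{\bnu_0}$, and then to convert closeness of blowups into closeness of weight vectors. First, by the triangle inequality for edit distance,
\[
\mathrm{edit}(R^\bnu,R^{\bnu_0})\le \mathrm{edit}(R^\bnu,X^\bmu)+\mathrm{edit}(X^\bmu,H).
\]
If $\bnu$ minimizes $\mathrm{edit}(X^\bmu,R^\bnu)$ over $\mathcal P(V(R))$, then comparing with the choice $\bnu_0$ gives $\mathrm{edit}(X^\bmu,R^\bnu)\le\mathrm{edit}(X^\bmu,H)\le\delta_R$. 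Hence $\mathrm{edit}(R^\bnu,R^{\bnu_0})\le 2\delta_R$.

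To use the triangle inequality I first need to justify it. Given optimal commensurable representatives for the two pairs, I would refine them to a common vertex set by the standard product-of-partitions construction. This is possible because measures on finite sets can be coupled: split each vertex into atoms indexed by pairs, with weights given by a coupling. I expect this step to be routine.

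The key step, and the main obstacle, is a rigidity estimate for a twin-free $R$. I claim there is $c_R>0$ such that for all $\bnu\in\mathcal P(V(R))$,
\[
\mathrm{edit}(R^\bnu,R^{\bnu_0})\ge c_R\min_y\bigl(\bnu_0(y)-\bnu(y)\bigr)_+ .
\]
In fact I only need the weaker statement that a small edit distance forces $\bnu(y)\ge\bnu_0(y)/2$ for every $y$.

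To prove this, I would take commensurable refinements $W_1^\bomega\sim R^\bnu$ and $W_2^\bomega\sim R^{\bnu_0}$. These come with type maps $\alpha,\beta:V(W)\to V(R)$, and $W_i$ is the pullback of $R$ along them. Consider the coupling $\gamma(a,b)\coloneqq\bomega\{w:\alpha(w)=a,\beta(w)=b\}$. The edit distance equals
\[
\sum_{a,b,a',b'}\gamma(a,b)\gamma(a',b')\,\bigl|A_R(a,a')-A_R(b,b')\bigr|.
\]
Suppose a cell $(a,b)$ has $\gamma(a,b)\ge\tau$ but $a$ and $b$ have different rows in a sense that matters. I would argue via a compactness and contradiction approach: the space of couplings is compact and the edit functional is continuous in $\gamma$. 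Any coupling with zero edit distance and $b$-marginal $\bnu_0$, which has full support, satisfies the following. For every $b$, pick $a$ with $\gamma(a,b)>0$; then the map $b\mapsto a$ is a strong homomorphism $R\to R$ on the supports, because zero edit distance forces $A_R(a,a')=A_R(b,b')$ whenever both cells are positive. By \cref{lem:twin-free-auto} this map is an automorphism $\sigma$. The same argument shows that each $b$ has a unique $a$, since two choices would be twins in $R$. Hence $\gamma$ is supported on the graph of $\sigma$, and so $\bnu=\bnu_0\circ\sigma^{-1}$, whose minimum equals $\min\bnu_0$.

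By compactness, for $\eps>0$ small there is $\delta$ such that edit distance at most $\delta$ forces $\bnu$ to be within $\eps$ in $\ell_\infty$ of some $\bnu_0\circ\sigma^{-1}$. The minimum over the edit-optimal refinements is attained, or can be approximated within $\delta$, which is harmless here. Taking $\eps=\min\bnu_0/2$, I set $\lambda_R\coloneqq 1/(2\|\mathbf k\|_1)$ and choose $\delta_R$ as half of the resulting $\delta$.

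The delicate point is that the refinement vertex sets have unbounded size. I would handle this by working directly with couplings $\gamma$ on $V(R)\times V(R)$ as above, which live in a fixed compact set. This works because every pair of representatives equivalent to $R^\bnu$ and $R^{\bnu_0}$ factors through such a coupling.
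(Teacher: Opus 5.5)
Your proof is correct, but it takes a different route from the paper at the key step. The first step is the same as the paper's. You compare with $\bnu_0=\bmu_{\mathbf k}$ and use the triangle inequality to get $\mathrm{edit}(R^\bnu,R^{\bnu_0})\le 2\delta_R$.

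\textbf{The paper's route.} The paper applies \cref{lem:weighted-continuity} with test graph $F=R$. Every strong homomorphism $R\to R$ is an automorphism, so $s(R;R^\bnu)=|\Aut(R)|\prod_x\bnu(x)$. Hence $|\Aut(R)|\,\bigl|\prod_x\bnu(x)-\prod_x\bnu_0(x)\bigr|\le 2|V(R)|^2\delta_R$. This bounds the product from below, and therefore every coordinate. The argument is three lines, fully quantitative, and reuses the black-box continuity estimate. Its cost is that $\lambda_R$ is only of order $\prod_x\bnu_0(x)$.

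\textbf{Your route.} You reduce the edit distance between two blowups of $R$ to the quadratic form $\sum\gamma(a,b)\gamma(a',b')\,|A_R(a,a')-A_R(b,b')|$ over couplings $\gamma$ of $(\bnu,\bnu_0)$ on the fixed finite set $V(R)\times V(R)$. You identify its zero set, namely couplings supported on the graph of an automorphism, via \cref{lem:twin-free-auto}, and finish by compactness.
\begin{itemize}
\item What it buys: a sharper structural conclusion. $\bnu$ is $\ell_\infty$-close to $\bnu_0\circ\sigma^{-1}$ for some automorphism $\sigma$, so $\lambda_R=1/(2\|\mathbf k\|_1)$ works.
\item What it costs: $\delta_R$ is non-effective.
\item What it requires: you must justify the reduction to couplings. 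This includes lifting the type maps when the reduced representative $R[\mathrm{supp}\,\bnu]$ is not twin-free, which is fine after splitting atoms.
\item The uniqueness step (each $b$ has a single $a$) needs two facts you left implicit. The first marginal of the limiting coupling has full support because $\sigma$ is onto. And $a_1a_2$ is a non-edge, which you get by taking $(a',b')=(a_2,b)$.
\end{itemize}

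\textbf{Two intermediate statements to fix.} Your final argument does not use either of them, but both are wrong as written.
\begin{itemize}
\item The displayed rigidity inequality is vacuous. Since $\sum_y(\bnu_0(y)-\bnu(y))=0$, the quantity $\min_y(\bnu_0(y)-\bnu(y))_+$ is always $0$.
\item The claimed ``weaker statement'' $\bnu(y)\ge\bnu_0(y)/2$ is false whenever $R$ has an automorphism $\sigma$ with $\bnu_0\circ\sigma^{-1}\ne\bnu_0$. In that case $\mathrm{edit}(R^{\bnu_0\circ\sigma^{-1}},R^{\bnu_0})=0$. A concrete example is $R=P_4$ with unbalanced $\mathbf k$ and $\sigma$ the reversal.
\end{itemize}
What your compactness argument actually proves, and what you use when you set $\eps=\min\bnu_0/2$, is the automorphism-twisted version. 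State that version instead.
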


\begin{proof}
Such minimizers exist: the simplex $\mathcal P(V(R))$ is compact, and $\bnu\mapsto\mathrm{edit}(X^\bmu,R^\bnu)$ is continuous. As above, weighted graphs that differ only by zero-weight vertices represent the same equivalence class, so the triangle inequality for $\mathrm{edit}$ applies on these equivalence classes.

Let $\bmu_{\mathbf k}(x)\coloneqq\mathbf k(x)/\|\mathbf k\|_1$. Since $H$ is equivalent to $R^{\bmu_{\mathbf k}}$, the minimizing property gives
\[
        \mathrm{edit}(X^\bmu,R^\bnu)\le \mathrm{edit}(X^\bmu,R^{\bmu_{\mathbf k}})=\mathrm{edit}(X^\bmu,H).
\]
Using that $\mathrm{edit}$ is a metric on equivalence classes of weighted graphs, we obtain
\[
        \mathrm{edit}(R^\bnu,R^{\bmu_{\mathbf k}})
        \le \mathrm{edit}(R^\bnu,X^\bmu)+\mathrm{edit}(X^\bmu,R^{\bmu_{\mathbf k}})
        \le 2\delta_R.
\]
By \cref{lem:weighted-continuity}, applied to $F=R$, the number $s(R;R^\bnu)$ is close to $s(R;R^{\bmu_{\mathbf k}})$ when $\delta_R$ is small. Since $R$ is twin-free, every strong homomorphism $R\to R$ is an automorphism by \cref{lem:twin-free-auto}. Hence
\[
        s(R;R^\bnu)=|\Aut(R)|\prod_{x\in V(R)}\bnu(x).
\]
The value at $\bmu_{\mathbf k}$ is positive. Taking $\delta_R$ small enough forces $\prod_x\bnu(x)\ge c_H>0$. Since all coordinates are at most one, each coordinate is at least $c_H$. We may take $\lambda_R\coloneqq c_H$.
\end{proof}

\section{Proof of the main theorem}
\label{sec:proof}

We now assemble the ingredients from the previous section. The proof has three steps. First, finite local optimality and the weighted stability results show that every exact extremal graph is close, in maximum error degree, to a blowup $B$ of the twin-free quotient $R$. Second, a cleaning lemma shows that the error graph $J\coloneqq G\symdiff B$ must be empty once $h$ is large. Finally, the resulting quotient blowup is refined back to a blowup of the original graph $H$.

\subsection{Stability in the finite setting}
\label{sec:finite-stability}

The first step is to turn the weighted stability results into a statement about an exact finite extremal graph. \Cref{prop:weighted-input:global} gives closeness to some weighted blowup of the quotient $R$, while \cref{lem:weighted-rooted} and \cref{lem:finite-local-optimality} force each vertex to have a row close to one of the quotient rows. This yields a blowup of $R$ on the original vertex set, with all parts linear and with only small maximum-degree error.

\begin{proposition}
\label{prop:finite-stability}
Let $H=R(\mathbf k)$, where $R$ is twin-free. There is $\lambda_0=\lambda_0(H)>0$ such that for every $\eta>0$ there exists $h_{\ref{prop:finite-stability}}=h_{\ref{prop:finite-stability}}(H,\eta)$ with the following property. For every $h\ge h_{\ref{prop:finite-stability}}$ there is $n_{\ref{prop:finite-stability}}=n_{\ref{prop:finite-stability}}(H,h,\eta)$ such that, if $n\ge n_{\ref{prop:finite-stability}}$ and $G$ is an $n$-vertex graph maximizing $N_{H^{(h)}}(G)$, then there is a blowup $B$ of $R$ on $V(G)$ such that
\[
        |B_v|\ge\lambda_0 n\quad\text{for every }v\in V(R),
        \quad\text{and}\quad
        \Delta(G\symdiff B)\le\eta n.
\]
\end{proposition}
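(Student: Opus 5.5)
Write $F\coloneqq F_h=H^{(h)}=R(h\mathbf k)$ and $m\coloneqq|V(F)|$. The plan is to show that $G^{u_G}$ is weighted-close to a blowup of $R$, and then to use rooted stability together with finite local optimality to upgrade this to a statement about every single vertex.

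\textbf{Step 1: global closeness to a quotient blowup.} First compare $p_F(G)$ with the weighted optimum. Since $G$ maximizes $N_F$, it beats a balanced finite blowup of $R$ with the optimal part ratios from \cref{prop:weighted-input:maximum}. Hence $p_F(G)\ge \max_{\bnu}s(F;R^\bnu)-o_n(1)$. In particular $p_F(G)\ge s(F;H)-o_n(1)$. Also $|s(F;G^{u_G})-p_F(G)|\le m^2/n$, since collisions have probability at most $m^2/n$. So for $n$ large, $s(F;G^{u_G})\ge \frac12 s(F;H)$.

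Given a target $\delta>0$, apply \cref{prop:weighted-input:global} with $\min\{\delta,\delta_R\}$. For $h\ge h_1$ this gives $\mathrm{edit}(G^{u_G},H)\le\delta$. Now take $\bnu$ minimizing $\mathrm{edit}(G^{u_G},R^\bnu)$. By \cref{lem:boundary}, $\bnu(y)\ge\lambda_R$ for every $y\in V(R)$, and $\mathrm{edit}(G^{u_G},R^\bnu)\le\delta$.

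\textbf{Step 2: a commensurable finite blowup.} Realize this edit distance by commensurable representatives. The minimum may require splitting the vertices of $G$ into fractional pieces. To handle this, refine by replacing each vertex of $G$ with many equal-weight copies, or argue directly with a fractional assignment. Then round to an actual partition $V(G)=\bigcup_y B'_y$ with $|B'_y|\approx\bnu(y)n$. The resulting finite blowup $B'$ satisfies $\mathrm{edit}(G^{u_G},B'^{u})\le 2\delta$ and has all parts of size at least $(\lambda_R/2)n$.

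I expect this rounding step to be the main technical nuisance. I would handle it with a Birkhoff/vertex-of-polytope argument: an optimal fractional assignment can be taken almost integral, with only $|V(R)|$ split vertices, and those contribute $O(1/n)$.

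\textbf{Step 3: each vertex has a nearby quotient row.} Fix $\rho>0$ small, and apply \cref{lem:weighted-rooted} with $\rho$ and $\lambda=\lambda_R/2$. This gives $h_2$ and $\delta$; choose $\delta$ in Step 1 accordingly. Suppose some vertex $x$ had no row of $B'$ within $\rho$. Then $T_x(F;G^{u})\le\rho\,T_x(F;B'^{u})$.

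We bound $T_x(F;B'^u)$ uniformly. The row of $x$ in $B'$ is a row of $R^{\bnu'}$. Each such rooted density is at most $C\cdot\max_\bnu s(F;R^\bnu)$, where $C$ depends only on $\lambda_R$ and $R$. This holds because in a blowup of $R$ with parts of weight at least $\lambda$, rooted densities at the different types are comparable. (Alternatively, the root type contributes a factor at most $\bnu(y)^{-1}\le 2/\lambda_R$ over the average.) Hence
\[
T_x(F;G^u)\le \rho C\,p_F(G)+o(1).
\]
By \cref{lem:finite-local-optimality}, however, $T_x(F;G^u)\ge p_F(G)-C_F/n$. Since $p_F(G)$ is bounded below by a positive constant depending on $h$, taking $\rho<1/(2C)$ and $n$ large gives a contradiction.

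\textbf{Step 4: reassignment and the degree bound.} By Step 3, every $x$ has some type $y(x)$ whose $B'$-row differs from the $G$-row of $x$ on at most $\rho n$ vertices. Reassign each $x$ to the part $y(x)$, giving a new blowup $B$. Only a small set of vertices change part: each moved vertex accounts for at least roughly $\lambda_R n$ errors, since distinct $R$-rows differ on some part by twin-freeness. So the number moved is at most $O(\delta/\lambda_R)n$. Hence parts remain of size at least $\lambda_0 n$, with $\lambda_0=\lambda_R/4$.

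Now each row of $G$ differs from the corresponding row of $B$ on at most $\rho n+O(\delta/\lambda_R)n$ vertices. Choosing $\rho$ and then $\delta$ small in terms of $\eta$ gives $\Delta(G\symdiff B)\le\eta n$. This fixes $h_{\ref{prop:finite-stability}}=\max\{h_0,h_1,h_2\}$ and then $n_{\ref{prop:finite-stability}}$.

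One point to check is the positivity of $p_F(G)$ for fixed $h$, which holds since $p_F(G)\ge s(F;H)/2>0$. The main obstacle is Step 3, namely the uniform comparison of rooted densities in $B'$ with $p_F(G)$, together with the rounding in Step 2.
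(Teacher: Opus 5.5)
Your argument is correct in outline, but from Step 2 on it takes a different route from the paper. The justification you give for Step 2 does not work as written, although the step itself is easy to repair.

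\textbf{Comparison of routes.} Steps 1 and 3 use the paper's ingredients: the lower bound on $p_F(G)$, global stability together with \cref{lem:boundary}, and the rooted lemma played off against \cref{lem:finite-local-optimality} via $\bnu(y)T_x\le s\le M_h$ from \eqref{eq:root-average}. Use your second justification there; the remark that rooted densities at different types are comparable is not needed.

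The paper stays in the weighted setting almost to the end. It refines the minimizing commensurable pair so that each $x\in V(G)$ becomes a set $S_x$ of weight $1/n$ with unchanged rooted density, and applies the rooted lemma to the pieces. It then uses \emph{minimality} of $\bnu$ (retyping one piece would strictly lower the edit sum) to put every piece within $\rho$ of its own type row. Only then does it round, choosing one random piece per vertex; Chernoff bounds control every degree of $G\symdiff B$ and every part size at once.

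You instead round first to an integral $B'$ with only a global bound on $|G\symdiff B'|$. You apply the rooted lemma directly to the finite commensurable pair $(G^{u_G},B'^{u})$, so nothing like part (iii) of the refinement claim is needed. Then you clean deterministically:
\begin{itemize}
\item By twin-freeness, two distinct type rows of $B'$ disagree on a whole part, so the nearby type is unique once $\rho<\lambda_R/4$.
\item Each moved vertex therefore contributes at least $(\lambda_R/2-\rho)n-1$ pairs to $G\symdiff B'$, so only $O(\delta/\lambda_R)n$ vertices move.
\item Every row of $G\symdiff B$ then has at most $\rho n+O(\delta/\lambda_R)n+2$ entries.
\end{itemize}
This reassignment replaces both the minimality argument and the concentration step. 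Your order of constants ($\rho$ from $\eta$, then $\delta$, then $h$, then $n$) is fine.

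\textbf{The problem in Step 2.} The edit sum is a \emph{quadratic}, not linear, function of the fractional assignment $f=(f_x)_{x\in V(G)}$ coming from a minimizing pair. Birkhoff or vertex-of-the-transportation-polytope reasoning is for linear objectives, so it does not apply. You give no other reason why a minimizer with the marginals $\bnu$ held fixed should have only $|V(R)|$ split vertices.

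What you actually need is weaker and true. Write the edit sum as an off-diagonal part plus a diagonal part.
\begin{itemize}
\item The off-diagonal part $\sum_{x\ne x'}\sum_{t,t'}f_x(t)f_{x'}(t')\,|A_G(x,x')-A_R(t,t')|$ is linear in each $f_x$ separately.
\item The diagonal part is nonnegative and vanishes for integral $f$.
\end{itemize}
So round the rows one at a time to vertices of the simplex (or independently at random, as the paper does), letting the marginals move. This yields a partition with $2|G\symdiff B'|/n^2\le\mathrm{edit}(G^{u_G},R^\bnu)\le\delta$.

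The part sizes then come for free. The proof of \cref{lem:boundary} uses only $\mathrm{edit}(G^{u_G},R^{\bnu'})\le\delta_R$ and $\mathrm{edit}(G^{u_G},H)\le\delta_R$, not minimality. Hence $\bnu'(t)=|B'_t|/n\ge\lambda_R$ for every $t$. With this repair your argument goes through.
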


\begin{proof}
Fix $\eta>0$.
Let $\lambda_R,\delta_R$ be given by \cref{lem:boundary}, and set $\lambda_0\coloneqq\lambda_R/2$. Choose $\rho$ such that $0<\rho<\min\{\eta/20,\lambda_R/4\}$. Let $\delta_{\mathrm{root}}>0$ and $h_{\mathrm{root}}$ be supplied by \cref{lem:weighted-rooted} for this $\rho$ and for $\lambda=\lambda_R$, and put
\[
        \delta_0\coloneqq\min\{\delta_R,~\delta_{\mathrm{root}}\}.
\]
Take $h$ large enough that $h\ge h_{\mathrm{root}}$, \cref{prop:weighted-input:maximum} applies, and \cref{prop:weighted-input:global} applies with error parameter $\delta_0$. Put $F\coloneqq H^{(h)}$, $m\coloneqq |V(F)|$, and
\[
        M_h\coloneqq\sup_{X^\bmu} s(F;X^\bmu)=\max_{\bnu\in M(V(R))} s(F;R^\bnu).
\]
Throughout this proof, all $o_n(1)$ terms are taken with $H,h,\eta$ fixed.

Let $G$ be an $n$-vertex extremal graph, that is, $N_F(G)=\max\{N_F(G'):|V(G')|=n\}$.

\begin{claim}
\label{claim:extremal-density-lower-bound}
We have $p_F(G)\ge M_h-o_n(1)$.
\end{claim}

\begin{proof}
Choose $\bnu\in M(V(R))$ with $s(F;R^\bnu)=M_h$. Choose integers $n_x$, $x\in V(R)$, such that $\sum_x n_x=n$ and $|n_x-n\bnu(x)|\le1$ for every $x$, and let $\hat G_n$ be the blowup of $R$ with parts of sizes $n_x$. Then $p_F(\hat G_n)=M_h-o_n(1)$. Indeed, compare $p_F(\hat G_n)$ with the probability that an independent uniform map $V(F)\to V(\hat G_n)$ is a strong homomorphism. Since $F$ is fixed once $h$ is fixed, such a map is non-injective with probability $O_h(1/n)$, so conditioning on injectivity changes the probability by $O_h(1/n)$. Also, the empirical part weights $n_x/n$ differ from $\bnu(x)$ by $O_h(1/n)$ for every $x\in V(R)$, and hence the strong-homomorphism probability for $\hat G_n$ differs from $s(F;R^\bnu)=M_h$ by $O_h(1/n)$. Since $G$ is extremal, $p_F(G)\ge p_F(\hat G_n)$, and the claim follows.
\end{proof}

Since every injective embedding is a strong homomorphism,
\[
        s(F;G^{u_G})\ge \frac{(n)_m}{n^m}p_F(G)=M_h-o_n(1).
\]
\cref{lem:finite-local-optimality} gives, for every $x\in V(G)$,
\begin{equation}
\label{eq:all-vertices-rooted}
        T_x(F;G^{u_G})\ge M_h-o_n(1).
\end{equation}
Since $M_h\ge s(F;H)>0$ is fixed once $h$ is fixed, \cref{prop:weighted-input:global} implies, for all sufficiently large $n$, that $\mathrm{edit}(G^{u_G},H)\le\delta_0$.

Choose $\bnu\in\mathcal P(V(R))$ minimizing $\mathrm{edit}(G^{u_G},R^\bnu)$. By \cref{lem:boundary}, $\bnu(x)\ge\lambda_R$ for every $x\in V(R)$. Since $H$ is equivalent to $R^{\bmu_{\mathbf k}}$, where $\bmu_{\mathbf k}(x)=\mathbf k(x)/\|\mathbf k\|_1$, the minimizing choice of $\bnu$ gives
\[
        \mathrm{edit}(G^{u_G},R^\bnu)\le \mathrm{edit}(G^{u_G},R^{\bmu_{\mathbf k}})
        =\mathrm{edit}(G^{u_G},H)\le\delta_0\le\delta_{\mathrm{root}}.
\]
Take commensurable representatives $X^\bomega$ and $Y^\bomega$ such that $X^\bomega\sim G^{u_G}$ and $Y^\bomega\sim R^\bnu$, and such that this pair
realizes the minimum. Then $\mathrm{edit}(X^\bomega,Y^\bomega)\le\delta_{\mathrm{root}}$.

\begin{claim}
\label{claim:refinement-representatives}
The representatives $X^\bomega$ and $Y^\bomega$ may be chosen so that the following properties hold.
\begin{enumerate}[label=\textup{(\roman*)}]
\item The common vertex set admits a partition satisfying $V(X)=V(Y)=\bigsqcup_{x\in V(G)}S_x$ and $\bomega(S_x)=1/n$.
\item The graph $X$ is the usual independent twin-splitting of $G$, that is, there are no edges inside any $S_x$, and $X$ is constant on $S_x\times S_y$, with value $A_G(x,y)$, for $x\ne y$.
\item For every $x\in V(G)$ and every $z\in S_x$, we have $T_z(F;X^\bomega)=T_x(F;G^{u_G})$.
\end{enumerate}
\end{claim}

\begin{proof}
Start with any minimizing commensurable pair. Delete zero-weight vertices from the common vertex set. Since $X^\bomega\sim G^{u_G}$, each twin class $C$ of $G$ corresponds to a twin class $U_C$ of $X$, and $\bomega(U_C)=|C|/n$.
For each $u\in U_C$ and each $x\in C$, replace $u$ by a copy $(u,x)$ of weight $\bomega(u)/|C|$, and put
\[
        S_x\coloneqq\{(u,x):u\in U_C\}.
\]
Then $\bomega(S_x)=1/n$ for every $x\in V(G)$. In $X$, give each copy the same row as $u$, with copies of a single vertex made non-adjacent. Equivalently, the refined $X$ is the independent twin-splitting of $G$ described above. Refine $Y$ on the same copied vertex set, with the same weights, by giving each copy of $u$ the same row as $u$ had in $Y$.

This simultaneous splitting preserves commensurability. It also preserves the weighted edit sum: every original ordered pair $(u,v)$ is replaced by copied ordered pairs whose total weight is $\bomega(u)\bomega(v)$, and the adjacency discrepancy is the same on all those copied pairs. Since splitting vertices into twins does not change the reduced equivalence class, the refined graphs are still equivalent to $G^{u_G}$ and $R^\bnu$, respectively. Hence the refined pair realizes the same minimum.

It remains only to check the rooted-density statement. If $z\in S_x$ and the remaining vertices are sampled from $X^\bomega$, the induced distribution on the labels $y\in V(G)$ is uniform because $\bomega(S_y)=1/n$ for every $y$. The adjacency pattern among the sampled points, and between them and the root $z$, is exactly the adjacency pattern in $G$ among the corresponding labels. Therefore the rooted experiment defining $T_z(F;X^\bomega)$ is the same as the rooted experiment defining $T_x(F;G^{u_G})$.
\end{proof}

Use representatives satisfying \cref{claim:refinement-representatives}, and let $\tau:V(Y)\to V(R)$ be the type map of the weighted blowup $Y$.

\begin{claim}
\label{claim:row-close-to-quotient-row}
For every $z\in V(X)$, there exists $z'\in V(Y)$ such that
\[
        \bomega\{w:A_X(z,w)\ne A_Y(z',w)\}\le\rho.
\]
Moreover, each $z\in V(X)$ is close to its own type row in the minimizing representative:
\begin{equation}
\label{eq:own-row-close}
        \bomega\{w:A_X(z,w)\ne A_Y(z,w)\}\le\rho
        \qquad\text{for all }z\in V(X).
\end{equation}
\end{claim}

\begin{proof}
We first prove the first assertion. Suppose, for a contradiction, that it fails. Then there is a vertex $z\in V(X)$ such that
\[
        \bomega\{w:A_X(z,w)\ne A_Y(z',w)\}>\rho
        \qquad\text{for every }z'\in V(Y).
\]
Thus $z$ satisfies the hypothesis of \cref{lem:weighted-rooted}, applied to the commensurable weighted graphs $X^\bomega$ and $Y^\bomega$. Hence
\[
        T_z(F;X^\bomega)\le \rho\,T_z(F;Y^\bomega).
\]
If $z\in S_x$, then $T_z(F;X^\bomega)=T_x(F;G^{u_G})$, so the left-hand side is at least $M_h-o_n(1)$ by
\eqref{eq:all-vertices-rooted}. On the other hand, $T_z(F;Y^\bomega)\le M_h/\lambda_R$: indeed, all vertices of $Y$ with the same $R$-type $\tau(z)$ have the same rooted density, and their total weight is at least $\lambda_R$, while their contribution to \eqref{eq:root-average} is at most $s(F;Y^\bomega)\le M_h$. Thus $M_h-o_n(1)\le(\rho/\lambda_R)M_h$. Since $M_h>0$ is fixed and $\rho<\lambda_R/4$, this is impossible for all sufficiently large $n$. This proves the first assertion.

It remains to prove \eqref{eq:own-row-close}. Suppose that it fails for some $z\in V(X)$. By the first assertion, there is a vertex $z'\in V(Y)$ whose row is within $\rho$ of the row of $z$ in $X$. Let $t=\tau(z)$ and $t'=\tau(z')$. For $s\in V(R)$ define
\[
        D_s(z)\coloneqq
        \sum_{w\ne z}\bomega(w)\left|A_X(z,w)-A_R(s,\tau(w))\right|.
\]
Then $D_t(z)>\rho$, while the row closeness to $z'$ gives $D_{t'}(z)\le\rho$. If we change only the type of $z$ in $Y$ from $t$ to $t'$, the resulting graph $Y'$ is still a weighted blowup of $R^{\bnu'}$, where
\[
        \bnu'\coloneqq \bnu-\bomega(z)\mathbf 1_t+\bomega(z)\mathbf 1_{t'}\in\mathcal P(V(R)).
\]
The diagonal contribution is unchanged, and the weighted edit sum changes by
\[
        2\bomega(z)\bigl(D_{t'}(z)-D_t(z)\bigr)<0,
\]
so the new commensurable pair $(X^\bomega,Y'^\bomega)$ has strictly smaller weighted edit sum than $(X^\bomega,Y^\bomega)$. Since $X^\bomega\sim G^{u_G}$ and $Y'^\bomega$ is a weighted blowup of $R^{\bnu'}$, this new edit sum is an admissible upper bound for $\mathrm{edit}(G^{u_G},R^{\bnu'})$. This contradicts the choice of $\bnu$ minimizing $\mathrm{edit}(G^{u_G},R^\bnu)$ over $\mathcal P(V(R))$.
\end{proof}

For each $x\in V(G)$, independently choose a random point $Z_x\in S_x$ with law
\[
        \mathbb P(Z_x\in A)=n\bomega(A)\qquad\text{for all }A\subseteq S_x,
	\]
and put $L(x)\coloneqq\tau(Z_x)$. Define a graph $B$ on the vertex set $V(G)$ by taking
\[
        B_v\coloneqq L^{-1}(v)\qquad\text{for each }v\in V(R),
\]
and by joining every pair between $B_v$ and $B_w$ exactly when $vw\in R$, with no edges inside any $B_v$. Thus $B$ is a blowup of $R$. We now use \eqref{eq:own-row-close} to control the maximum error degree.

Fix $x\in V(G)$ and condition on $Z_x=z\in S_x$. For each $y\ne x$, let $I_y$ be the indicator of the event that $xy$ is an edge of $G\symdiff B$. After $Z_x$ is fixed, the variable $I_y$ depends only on the independent choice of $Z_y$, so the variables $\{I_y:y\ne x\}$ are independent Bernoulli variables, although their success probabilities may differ. Their conditional expected sum is
\[
        \mathbb E\Big[ \sum_{y\ne x}I_y\mid Z_x=z \Big]
        = n\sum_{y\ne x}\bomega\{w\in S_y:A_X(z,w)\ne A_Y(z,w)\}
        \le n\bomega\{w:A_X(z,w)\ne A_Y(z,w)\}
        \le \rho n,
\]
where the last inequality is \eqref{eq:own-row-close}. Since $\rho<\eta/20$, a standard Chernoff bound gives a constant $c_\eta>0$ such that, uniformly in $x$ and in $z$,
\[
        \mathbb P\bigl(d_{G\symdiff B}(x)>\eta n\mid Z_x=z\bigr)
        \le \mathrm{e}^{-c_\eta n}.
\]
Since the bound is uniform in the value of $Z_x$, averaging over $Z_x$ gives
\[
        \mathbb P\bigl(d_{G\symdiff B}(x)>\eta n\bigr)
        \le \mathrm{e}^{-c_\eta n}
        \qquad\text{for every }x\in V(G).
\]
Now
\[
        \{\Delta(G\symdiff B)>\eta n\}
        \subseteq
        \bigcup_{x\in V(G)}
        \{d_{G\symdiff B}(x)>\eta n\}.
\]
Therefore, by a union bound over the $n$ choices of $x$,
\[
        \mathbb P\bigl(\Delta(G\symdiff B)>\eta n\bigr)
        \le n\mathrm{e}^{-c_\eta n}=o(1).
\]

The same random labels also keep all parts large. Indeed,
\[
        \mathbb E[|B_v|]=n\bomega(\tau^{-1}(v))=\bnu(v)n\ge\lambda_R n
        \qquad\text{for all }v\in V(R),
\]
and the indicators $\mathbf 1_{\{L(x)=v\}}$, $x\in V(G)$, are independent. Applying Chernoff's bound to each fixed $v\in V(R)$ gives a constant $c_R>0$ such that
\[
        \mathbb P\bigl(|B_v|<\lambda_R n/2\bigr)
        \le \mathrm{e}^{-c_R n}
        \qquad\text{for every }v\in V(R).
\]
Since $V(R)$ is fixed, another union bound gives
\[
        \mathbb P\bigl(\exists v\in V(R): |B_v|<\lambda_R n/2\bigr)
        \le |V(R)|\mathrm{e}^{-c_R n}=o(1).
\]
Together with the preceding maximum-degree estimate, this implies
\[
        \mathbb P\bigl(\Delta(G\symdiff B)\le\eta n
        \text{ and } |B_v|\ge\lambda_R n/2\text{ for every }v\in V(R)\bigr)
        =1-o(1).
\]
In particular, this probability is positive for all sufficiently large $n$. For such an outcome, $B$ satisfies $|B_v|\ge\lambda_R n/2=\lambda_0 n$ for every $v$ and $\Delta(G\symdiff B)\le\eta n$, proving the proposition.
\end{proof}

\subsection{Cleaning wrong pairs}
\label{sec:cleaning}

We now show that the error graph supplied by \cref{prop:finite-stability},
\[
        J\coloneqq G\symdiff B
        \qquad\text{with}\qquad
        \Delta(J)\le\eta n,
\]
must in fact be empty. Concretely, take an embedding $\psi\in\Emb(F,G)\setminus\Emb(F,B)$, and let $\theta$ send each vertex $a\in V(F)$ to the part of $B$ containing $\psi(a)$. Since $\psi$ is not an embedding into $B$, there are distinct vertices $a,b\in V(F)$ such that
\[
        A_F(a,b)\ne A_R(\theta(a),\theta(b)).
\]
Thus $\theta$ is not a strong homomorphism from $F$ to $R$. The next lemma says that, for a large blowup $F=H^{(h)}$, this failure is witnessed around one vertex by linearly many adjacency disagreements. These disagreements will later force a large star in the error graph.

\begin{lemma}
\label{lem:many-forced-errors}
Let $H=R(\mathbf k)$, where $R$ is twin-free and $|V(R)|\ge2$. There is a constant $c_*=c_*(H)>0$ such that the following holds for every sufficiently large $h$. Put $F\coloneqq H^{(h)}=R(h\mathbf k)$. If $\theta:V(F)\to V(R)$ is not a strong homomorphism, then there are $a_0\in V(F)$ and $U\subseteq V(F)\setminus\{a_0\}$ with $|U|\ge c_*h$ such that
\[
        A_F(a_0,a)\ne A_R(\theta(a_0),\theta(a))
        \qquad\text{for all }a\in U.
\]
\end{lemma}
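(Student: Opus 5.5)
We argue by contradiction with a pigeonhole on the type map $\pi:V(F)\to V(R)$. Call a pair $\{a,b\}$ \emph{bad} if $A_F(a,b)\ne A_R(\theta(a),\theta(b))$; for $a\in V(F)$ let $\beta(a)$ be the number of bad pairs at $a$. Put $r\coloneqq|V(R)|$, and take $c_*$ small, say $c_*\le 1/(4r^2)$. Suppose that $\beta(a)<c_*h$ for every $a$. We will show that $\theta$ is then a strong homomorphism.

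\textbf{Step 1: $\theta$ is constant on most of each part.} Fix $v\in V(R)$ and let $P_v\coloneqq\pi^{-1}(v)$, so $|P_v|=h\mathbf k(v)\ge h$. Choose $c\in V(R)$ maximizing $|P_v\cap\theta^{-1}(c)|$. Then $|P_v\cap\theta^{-1}(c)|\ge h/r$. Let $a\in P_v$ with $\theta(a)=c'\ne c$. Since $R$ is twin-free, some $w\in V(R)$ has $A_R(c,w)\ne A_R(c',w)$. Now look at the vertices $b\in P_v\cap\theta^{-1}(c)$. Since $A_F(a,b)=A_F(a',b)$ for all $a,a'\in P_v$, the rows of $a$ and of these $b$ agree in $F$. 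We compare them on vertices whose $\theta$-image is $w$. This needs a more robust argument, so instead compare each vertex directly with $R$.

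\textbf{Step 1, revised.} For $a\in P_v$ and every $d\in V(F)\setminus\{a\}$, we have $A_F(a,d)=A_R(v,\pi(d))$. Hence $a$ has fewer than $c_*h$ vertices $d$ with $A_R(v,\pi(d))\ne A_R(\theta(a),\theta(d))$. Define the \emph{majority map} $\sigma:V(R)\to V(R)$ by letting $\sigma(u)$ be the most frequent $\theta$-value on $P_u$. Then at least $h/r$ vertices $d\in P_u$ have $\theta(d)=\sigma(u)$. If $A_R(v,u)\ne A_R(\theta(a),\sigma(u))$ for some $u$, then all of these roughly $h/r$ vertices give bad pairs at $a$. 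This contradicts $\beta(a)<c_*h$ once $c_*<1/r$ and $h$ is large, which absorbs the single excluded vertex $d=a$. So for every $a\in P_v$ and every $u\in V(R)$,
\[
A_R(v,u)=A_R(\theta(a),\sigma(u)).
\]

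\textbf{Step 2: conclude.} Apply the identity to majority vertices $a\in P_v$, that is, those with $\theta(a)=\sigma(v)$. This gives $A_R(v,u)=A_R(\sigma(v),\sigma(u))$ for all $u,v$, including $u=v$, where both sides are $0$. So $\sigma$ is a strong homomorphism $R\to R$, and by \cref{lem:twin-free-auto} it is an automorphism. Now take an arbitrary $a\in P_v$. The identity gives $A_R(\theta(a),\sigma(u))=A_R(v,u)=A_R(\sigma(v),\sigma(u))$ for all $u$. Since $\sigma$ is surjective, $\theta(a)$ and $\sigma(v)$ have the same row in $R$, so by twin-freeness $\theta(a)=\sigma(v)$. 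Thus $\theta=\sigma\circ\pi$, which is a strong homomorphism as a composition of strong homomorphisms. This contradicts the hypothesis.

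The main obstacle is Step 1: we need bad pairs at a single vertex, not merely many bad pairs overall. The majority map handles this, because each part contributes a linear-size block $\theta^{-1}(\sigma(u))\cap P_u$ on which a wrong row entry of $a$ is repeated about $h/r$ times. The constant $c_*=1/(2r)$ suffices for $h\ge 2r$.
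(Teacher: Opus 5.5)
Your argument is correct and is essentially the paper's proof in contrapositive form. The paper also uses the majority map $\sigma$ and splits into two cases: either $\sigma$ fails to be a strong homomorphism (taking $U=M_y$), or it is one, hence an automorphism by \cref{lem:twin-free-auto}, and then a vertex with $\theta(a_0)\ne\sigma(\pi(a_0))$ yields $U=M_w\setminus\{a_0\}$. These two cases are exactly your two applications of the identity $A_R(v,u)=A_R(\theta(a),\sigma(u))$ in Step~2. You should delete the abandoned first version of Step~1 and fix a single constant, e.g.\ $c_*=1/(2r)$ with $h\ge 2r$, rather than also mentioning $c_*\le 1/(4r^2)$.
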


\begin{proof}
Let $r\coloneqq |V(R)|$, and let $k_{\min}\coloneqq\min_x\mathbf k(x)$. For each $x\in V(R)$, choose a value $\sigma(x)\in V(R)$ which occurs most often among the values of $\theta$ on the type class $\pi^{-1}(x)$, and set
\[
        M_x\coloneqq\{a\in\pi^{-1}(x):\theta(a)=\sigma(x)\}.
\]
Then
\[
        |M_x|\ge \frac{h\mathbf k(x)}{r}\ge \frac{hk_{\min}}{r}
        \qquad\text{for every }x\in V(R).
\]

First suppose that $\sigma:R\to R$ is not a strong homomorphism. Choose $x,y\in V(R)$ such that
\[
        A_R(x,y)\ne A_R(\sigma(x),\sigma(y)).
\]
Here $x\ne y$, since both graphs are loopless. Pick any $a_0\in M_x$ and take $U=M_y$. For every $a\in U$,
\[
        A_F(a_0,a)=A_R(x,y)
        \ne A_R(\sigma(x),\sigma(y))
        =A_R(\theta(a_0),\theta(a)),
\]
and $|U|\ge hk_{\min}/r$.

It remains to consider the case where $\sigma$ is a strong homomorphism. Since $R$ is twin-free, \cref{lem:twin-free-auto} implies that $\sigma$ is an automorphism. If $\theta(a)=\sigma(\pi(a))$ for every $a\in V(F)$, then $\theta$ would be a strong homomorphism, contrary to the assumption. Hence there is $a_0\in V(F)$ such that $\theta(a_0)\ne\sigma(\pi(a_0))$. By twin-freeness, there is $y\in V(R)$ such that
\[
        A_R(\theta(a_0),y)\ne A_R(\sigma(\pi(a_0)),y).
\]
Since $\sigma$ is onto, we may write $y=\sigma(w)$ for some $w\in V(R)$. Thus
\[
        A_R(\theta(a_0),\sigma(w))
        \ne A_R(\sigma(\pi(a_0)),\sigma(w))=A_R(\pi(a_0),w).
\]
Now take $U\coloneqq M_w\setminus\{a_0\}$. For every $a\in U$,
\[
        A_F(a_0,a)=A_R(\pi(a_0),w)
        \ne A_R(\theta(a_0),\sigma(w))
        =A_R(\theta(a_0),\theta(a)).
\]
Moreover, $|U|\ge hk_{\min}/r-1$.

Thus, in both cases, the desired conclusion holds with $c_*\coloneqq k_{\min}/(2r)$, after increasing the lower threshold on $h$ if necessary.
\end{proof}

The following lemma is the exact cleaning step. It compares embeddings lost and created by a small maximum-degree error graph and shows that any non-empty error graph decreases the number of induced embeddings.

\begin{lemma}
\label{lem:cleaning}
Let $H=R(\mathbf k)$, where $R$ is twin-free and $|V(R)|\ge2$. For every $\lambda>0$ there are $\eta=\eta(H,\lambda)>0$ and $h_{\ref{lem:cleaning}}=h_{\ref{lem:cleaning}}(H,\lambda)$ such that the following holds. Let $h\ge h_{\ref{lem:cleaning}}$ and put $F\coloneqq H^{(h)}$. If $n$ is sufficiently large, $B$ is a blowup of $R$ on a vertex set $V$ with every part of size at least $\lambda n$, and $G$ is another graph on $V$ such that the graph $J\coloneqq G\symdiff B$ satisfies $|J| > 0$ and $\Delta(J)\le\eta n$, then $N_F(B)>N_F(G)$.
\end{lemma}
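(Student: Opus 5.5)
We will compare the lost embeddings $\Emb(F,B)\setminus\Emb(F,G)$ with the created embeddings $\Emb(F,G)\setminus\Emb(F,B)$, charging both to edges of $J$. The goal is to show $|\mathrm{lost}|\ge c_1(\lambda,h)\,|J|\,n^{m-2}$ while $|\mathrm{created}|\le \bigl(\eta^{c_*h/2}C(h)\bigr)|J|\,n^{m-2}$, with the gain in the second bound beating $c_1$ once $\eta$ is small.

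\emph{Lost embeddings.} Fix $uv\in J$, with $u\in B_s$ and $v\in B_t$ (possibly $s=t$). We count the type-respecting embeddings $\phi\in\Emb(F,B)$ with $\pi$-type consistent, meaning $\phi(a)\in B_{\pi(a)}$, whose image contains $u,v$ and avoids every other $J$-edge incident with $u$ or $v$. If $s=t$ we also need $k(s)h\ge2$, which holds for $h\ge2$. Choose $a,b\in V(F)$ of types $s,t$ and set $\phi(a)=u$, $\phi(b)=v$. Place the remaining $m-2$ vertices in their parts, avoiding the at most $2\eta n$ $J$-neighbours of $u,v$ and avoiding any further $J$-edge among the placed vertices. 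Since parts have size at least $\lambda n$ and $\Delta(J)\le\eta n$, a greedy count gives at least $(\lambda-2m\eta)^{m-2}n^{m-2}$ maps in which $uv$ is the only $J$-pair in the image. Each such $\phi$ is in $\Emb(F,B)$ but fails to be in $\Emb(F,G)$, because the pair $uv$ flips. Each such $\phi$ is counted for exactly one $J$-edge, so the lost set has size at least $c_1|J|n^{m-2}$ with $c_1=(\lambda/2)^{m-2}$ for $\eta\le\lambda/(4m)$.

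\emph{Created embeddings.} Let $\psi\in\Emb(F,G)\setminus\Emb(F,B)$ and let $\theta$ be its part map. By \cref{lem:many-forced-errors} there are $a_0$ and $U$ with $|U|\ge c_*h$ such that, for each $a\in U$, $A_G(\psi(a_0),\psi(a))=A_F(a_0,a)\ne A_B(\psi(a_0),\psi(a))$. So $\psi(a_0)\psi(a)\in J$ for all $a\in U$, and the image contains a $J$-star of size at least $c_*h$ centred at $\psi(a_0)$. To count such $\psi$, first choose a $J$-edge $\psi(a_0)\psi(a_1)$, where $a_1\in U$; this gives at most $m^2\cdot 2|J|$ choices. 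Next place the other $\ge c_*h-1$ vertices of $U$ in $N_J(\psi(a_0))$, giving at most $(\eta n)^{c_*h-1}$ choices, together with $2^m$ choices of the set $U$. Finally place the remaining vertices arbitrarily, giving $n^{m-1-c_*h}$ choices. The total is at most
\[
|\mathrm{created}|\le 2^{m+1}m^2\,\eta^{c_*h-1}|J|\,n^{m-2}.
\]

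\emph{Comparison and main obstacle.} We need $2^{m+1}m^2\eta^{c_*h-1}<(\lambda/2)^{m-2}$, where $m=h\|\mathbf k\|_1$. Both sides are exponential in $h$, so we choose $\eta$ with $\eta^{c_*/2}\le (\lambda/8)^{\|\mathbf k\|_1}/4$, independent of $h$. Then for $h\ge h_{\ref{lem:cleaning}}$ the polynomial factors $m^2$ are absorbed, and $\eta\le\lambda/(4m)$ is not needed uniformly. This last point is the delicate step. The greedy lost count uses $\eta\le\lambda/(4m)$, which depends on $h$, and that must be avoided. We would therefore redo the lost count by requiring only that no further $J$-pair be incident to $u$ or $v$, costing a factor $(1-2\eta/\lambda)^{m}$, and by handling other $J$-pairs among the non-root vertices through a second-order charge in the created/lost bookkeeping. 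In fact any map whose image has $\ge2$ $J$-pairs is either in neither set or can be bounded by $|J|\,m^4\eta\lambda^{-1}$ times the main term. The main technical risk is making this bookkeeping of the ratio $\eta$ versus $\lambda^{m}$ uniform in $h$. The resolution is that the created bound gains $\eta^{\Theta(h)}$, which beats $\lambda^{-\Theta(h)}$, while the lost bound needs only $\eta\ll\lambda$, with the $m$-dependence being polynomial rather than exponential.
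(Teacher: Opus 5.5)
Your bound on created embeddings is essentially the paper's: the forced $J$-star from \cref{lem:many-forced-errors} gives $O(2^m m^2\eta^{c_*h-1}|J|n^{m-2})$. Your choice of $\eta$ for the final comparison, exponential in $h$ on both sides, is also right. The gap is in the lower bound for lost embeddings.

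You insist that $uv$ be the \emph{only} $J$-pair in the image, so that each lost embedding is counted exactly once. This forces a smallness condition on $\eta$ that depends on $m=h\|\mathbf k\|_1$: first $\eta\le\lambda/(4m)$, and in your repair something like $m^4\eta/\lambda\ll1$. But $\eta=\eta(H,\lambda)$ must be fixed before $h$, since in the main proof \cref{prop:finite-stability} is applied with this $\eta$ to produce $h_*$. So any hypothesis of the form $\eta\ll\lambda/\mathrm{poly}(m)$ is fatal. Your distinction between polynomial and exponential dependence on $m$ does not help when the dependence sits in the hypothesis on $\eta$ rather than in a multiplicative factor of the bound.

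The proposed second-order correction cannot work either. Suppose $J$ looks like a random graph of fixed density inside each part. Then a typical type-respecting embedding through $uv$ contains $\Theta(\eta m^2)\gg1$ further $J$-pairs. The embeddings in which $uv$ is the unique $J$-pair then form an $e^{-\Theta(\eta m^2)}$ fraction, which for large $h$ falls below the $\eta^{\Theta(h)}$ created bound you must beat. The overcount dominates; it is not a small correction. Also, a map in $\Emb(F,B)$ with several $J$-pairs in its image is never ``in neither set''; it is always lost. Restricting to maps avoiding $J$-neighbours of $u,v$ is harmless and uniform in $h$, but it does not restore uniqueness.

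The missing idea is that single counting is unnecessary. For each $uv\in J$, take \emph{all} type-respecting embeddings of $F$ into $B$ with $a\mapsto u$, $b\mapsto v$. For $n$ large there are at least $(\lambda n/2)^{m-2}$ of them, with no condition on $\eta$ at all. Each is lost because the pair $uv$ flips. An embedding is counted at most once per $J$-pair in its image, hence at most $\binom m2$ times, so
\[
L\ge \frac{|J|}{\binom m2}\Bigl(\frac{\lambda}{2}\Bigr)^{m-2}n^{m-2}.
\]
Now $\binom m2$ is a polynomial-in-$h$ factor multiplying the bound rather than a constraint on $\eta$. It is absorbed by the comparison of $\eta^{\Theta(h)}$ against $(\lambda/2)^{\Theta(h)}$ exactly as in your final step, with $\eta$ independent of $h$.
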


\begin{proof}
Let $c_*$ be the constant from \cref{lem:many-forced-errors}. We first choose the auxiliary constants used in the counting argument.
Choose $h_{\ref{lem:cleaning}}$ large enough that the following holds for every $h\ge h_{\ref{lem:cleaning}}$. Put
\[
        F\coloneqq H^{(h)},\qquad
        m\coloneqq |V(F)|=h|V(H)|,\qquad
        \ell\coloneqq\lfloor c_*h\rfloor.
\]
We can choose $h_{\ref{lem:cleaning}}$ sufficiently large so that, for this graph $F$, the conclusion of \cref{lem:many-forced-errors} is available for every map $\theta:V(F)\to V(R)$ that is not a strong homomorphism. We also require that every type class of $F$ has size at least two, i.e., $h\mathbf k(t)\ge2$ for every $t\in V(R)$, and that the following numerical inequalities hold:
\[
        m2^m\le3^m,
        \qquad
        \ell-1\ge c_*h/3,
        \qquad
        2\cdot 3^m\eta^{\ell-1}
        < \frac{1}{\binom m2}\left(\frac{\lambda}{2}\right)^{m-2}.
\]
The last requirement is possible because $m=h|V(H)|$ and we may choose $\eta$ small enough so that
\[
        \frac{3^{|V(H)|}\eta^{c_*/3}}
             {(\lambda/2)^{|V(H)|}}
             < \frac{1}{2},
\]
hence the exponential decay in $h$ absorbs the polynomial factor $\binom m2$.

Now fix $h\ge h_{\ref{lem:cleaning}}$, and use the notation $F,m,\ell$ above. Let $B$ and $G$ be as in the statement of~\cref{lem:cleaning}, set $J\coloneqq G\symdiff B$, and put $K\coloneqq |J|$.
We compare the embeddings lost when passing from $B$ to $G$ with the embeddings created by this change. Define
\[
        L\coloneqq |\Emb(F,B)\setminus\Emb(F,G)|,
        \qquad
        P\coloneqq |\Emb(F,G)\setminus\Emb(F,B)|.
\]
It is enough to prove $L>P$, since then $|\Emb(F,B)|>|\Emb(F,G)|$, equivalently $N_F(B)>N_F(G)$.

\begin{claim}
\label{claim:cleaning-loss-lower-bound}
For all sufficiently large $n$, we have $L\ge \frac{K}{\binom m2}\left(\frac{\lambda}{2}\right)^{m-2}n^{m-2}$.
\end{claim}

\begin{proof}
Fix a wrong pair $xy\in J$. Choose an arbitrary ordering of this unordered pair, and write $x\in B_u$ and $y\in B_v$. If $u\ne v$, choose $a,b\in V(F)$ with types $u,v$, respectively; if $u=v$, choose two distinct vertices $a,b$ of type $u$. Then
\[
        A_F(a,b)=A_B(x,y)\ne A_G(x,y).
\]
Every type-respecting embedding of $F$ into $B$ with $a\mapsto x$ and $b\mapsto y$ is therefore destroyed in $G$. If
\[
        r_t\coloneqq |\{a,b\}\cap \pi^{-1}(t)|\qquad\text{for all }t\in V(R),
\]
then $\sum_t r_t=2$. After the prescribed images $x,y$ have been used, the remaining $h\mathbf k(t)-r_t$ vertices of type $t$ in $F$ must be injected into the unused vertices of $B_t$. Therefore the number of type-respecting embeddings with $a\mapsto x$ and $b\mapsto y$ is
\[
        \prod_{t\in V(R)} (|B_t|-r_t)_{h\mathbf k(t)-r_t},
\]
which is at least
\[
        \prod_{t\in V(R)}\left(\frac{\lambda n}{2}\right)^{h\mathbf k(t)-r_t}
        = \left(\frac{\lambda n}{2}\right)^{m-2},
\]
for fixed $h$ and all sufficiently large $n$. Summing this bound over all $K$ wrong pairs and then dividing by $\binom m2$ gives the claim, because the image of any embedding contains at most $\binom m2$ unordered vertex pairs whose adjacency differs between $B$ and $G$.
\end{proof}

\begin{claim}
\label{claim:cleaning-gain-upper-bound}
For all sufficiently large $n$, we have $P\le 2\cdot 3^m\eta^{\ell-1}K n^{m-2}$.
\end{claim}

\begin{proof}
Let $\psi\in\Emb(F,G)\setminus\Emb(F,B)$. Let $\pi_B:V(B)\to V(R)$ be the type map of $B$, and set $\theta\coloneqq\pi_B\circ\psi$. Since $\psi$ is not an embedding into $B$, the map $\theta$ is not a strong homomorphism. By \cref{lem:many-forced-errors}, the image of $\psi$ contains a star with at least $\ell$ leaves in the error graph $J$.

Therefore $P$ is at most the number of ways to specify $\ell$ leaves of such a forced star and then place the remaining vertices. Indeed, the center preimage can be chosen in at most $m$ ways, the $\ell$ leaf preimages in at most $\binom{m-1}{\ell}\le2^m$ ways, the center image as some $x\in V$, the leaf images in at most $d_J(x)^\ell$ ways, and the remaining vertices in at most $n^{m-\ell-1}$ ways. Hence
\[
        P\le m2^m\sum_{x\in V} d_J(x)^\ell n^{m-\ell-1}
        \le 3^m\sum_{x\in V} d_J(x)^\ell n^{m-\ell-1}.
\]
Using $d_J(x)\le\eta n$ and $\sum_xd_J(x)=2K$, we get $P\le 2\cdot 3^m\eta^{\ell-1}K n^{m-2}$, as claimed.
\end{proof}

Recall that by the choice of $h_{\ref{lem:cleaning}}$, we have $2\cdot 3^m\eta^{\ell-1} < \frac{1}{\binom m2}\left(\frac{\lambda}{2}\right)^{m-2}$.
Combining \cref{claim:cleaning-loss-lower-bound,claim:cleaning-gain-upper-bound}, for all sufficiently large $n$ we have
\[
        P\le 2\cdot 3^m\eta^{\ell-1}K n^{m-2}
        < \frac{K}{\binom m2}\left(\frac{\lambda}{2}\right)^{m-2}n^{m-2}
        \le L.
\]
Thus $N_F(B)>N_F(G)$.
\end{proof}

\subsection{Proof of the main theorem}
\label{sec:main-proof}

Write $H=R(\mathbf k)$, where $R\coloneqq\widetilde H$ is twin-free. If $|V(R)|=1$, then $H=\overline K_q$ for some $q\ge1$. Choose $h_*=2$ when $q=1$, and $h_*=1$ otherwise. Then $qh\ge2$ for every $h\ge h_*$. For such $h$ and all $n\ge qh$, the unique graph maximizing the number of induced copies of $\overline K_{qh}$ is the edgeless graph: if a graph has an edge, every $qh$-set containing that edge is not independent, while the edgeless graph makes all $qh$-sets independent. Since the edgeless graph is a blowup of $H$, the theorem holds in this case. Hence assume $|V(R)|\ge2$.

Let $\lambda_0$ be given by \cref{prop:finite-stability}. Apply \cref{lem:cleaning} with $\lambda=\lambda_0$, obtaining $\eta>0$ and $h_{\ref{lem:cleaning}}$. Apply \cref{prop:finite-stability} with this $\eta$, obtaining $h_{\ref{prop:finite-stability}}$. Set
\[
        h_*\coloneqq\max\{h_{\ref{lem:cleaning}},h_{\ref{prop:finite-stability}}\}.
\]

Fix $h\ge h_*$, put $F\coloneqq H^{(h)}$, and let $G$ be an $n$-vertex graph maximizing $N_F(G)$, where $n$ is sufficiently large. By \cref{prop:finite-stability}, there is a blowup $B$ of $R$ on $V(G)$ such that every part has size at least $\lambda_0 n$ and $\Delta(G\symdiff B)\le\eta n$. If $G\ne B$, then \cref{lem:cleaning} gives $N_F(B)>N_F(G)$, contradicting the extremality of $G$. Therefore $G=B$, so $G$ is a blowup of $R$.

Finally, each part of $B$ has linear size. For all sufficiently large $n$, the part corresponding to $v\in V(R)$ can be split into $\mathbf k(v)$ non-empty subparts. All vertices inside one quotient part of $B$ have identical adjacencies outside the part and no adjacencies inside the part, so this refinement makes $G$ a blowup of $H=R(\mathbf k)$. The theorem follows.

\section*{Acknowledgments}

\begin{sloppypar}
X.L. was supported by the Excellent Young Talents Program (Overseas) of the National Natural Science Foundation of China.
\end{sloppypar}

\bibliographystyle{abbrv}
\bibliography{Inducibility}

\end{document}